\numberwithin{equation}{section}
\def\F{{\mathbb F}}
\def\P{{\mathbb P}}
\def\Z{{\mathbb Z}}
\newtheorem{theorem}{Theorem}[section]
\newtheorem{lemma}[theorem]{Lemma}
\newtheorem{proposition}[theorem]{Proposition}
\newtheorem{corollary}[theorem]{Corollary}
\theoremstyle{definition}
\newtheorem{remark}[theorem]{Remark}
\newtheorem{convention and reminder}[theorem]{Convention and Reminder}
\newtheorem{convention and remark}[theorem]{Convention and Remark}
\newtheorem{definition and remark}[theorem]{Definition and Remark}
\newtheorem{reminders and definition}[theorem]{Reminders and Definition}
\newtheorem{notation and remarks}[theorem]{Notation and Remarks}
\newtheorem{notation and remark}[theorem]{Notation and Remark}
\newcommand\reg{\operatorname{reg}}
\newcommand\Ker{\operatorname{\Ker}}
\begin{document}

\title{On hyperquadrics containing projective varieties}

\author{Euisung Park}
 
\address{Korea University, Department of Mathematics, Anam-dong, Seongbuk-gu, Seoul 136-701, Republic of Korea}
\email{euisungpark@korea.ac.kr}

\subjclass[2010]{Primary : 14N05, 14N25}
\keywords{Quadratic equations, Projective varieties of low degree, Castelnuovo theory}

\begin{abstract}
Classical Castelnuovo Lemma shows that the number of linearly independent quadratic equations of a nondegenerate irreducible projective variety of codimension $c$ is at most ${{c+1} \choose {2}}$ and the equality is attained if and only if the variety is of minimal degree. Also G. Fano's generalization of Castelnuovo Lemma implies that 
the next case occurs if and only if the variety is a del Pezzo variety. Recently, these results are extended to the next case in \cite{Pa3}. This paper is intended to complete the classification of varieties satisfying at least ${{c+1} \choose {2}}-3$ linearly independent quadratic equations. Also we investigate the zero set of those quadratic equations and apply our results to projective varieties of degree $\geq 2c+1$.
\end{abstract}

\maketitle
\thispagestyle{empty}

\section{Introduction}
\noindent Throughout this paper, we work over an algebraically
closed field $\Bbbk$ of characteristic zero. For a closed subscheme
$Z \subset \P^r$, we denote by $a_2 (Z)$ the number of linearly
independent quadratic equations of $Z$. That is, $a_2 (Z) = h^0
(\P^r , \mathcal{I}_Z (2))$.

Let $X \subset \P^{n+c}$ be a nondegenerate irreducible
projective variety of dimension $n$ and degree $d$. As it was
indicated in \cite{L}, the natural problem of finding an upper bound
of $a_2 (X)$ and classifying the borderline cases can be answered by
the classical Castelnuovo Lemma and its generalization by G. Fano (cf. \cite{C} and
\cite{Fa}).

\begin{theorem}[Theorem 1.2 and Theorem 1.6 in \cite{L}]\label{thm:known results 1 classical}
Let $X \subset \P^{n+c}$ be as above. Then
\smallskip

\renewcommand{\descriptionlabel}[1]%
             {\hspace{\labelsep}\textrm{#1}}
\begin{description}
\setlength{\labelwidth}{13mm} \setlength{\labelsep}{1.5mm}
\setlength{\itemindent}{0mm}

\item[$(1)$] ${\rm (G.~Castelnuovo)}$ $a_2 (X) \leq {{c+1} \choose {2}}$; the equality is attained if and only if $d=c+1$.
\item[$(2)$] ${\rm (G.~Fano)}$ If $c \geq 2$, then $a_2 (X) = {{c+1} \choose {2}}-1$ if and only if
a general linear curve section of $X$ is a linearly normal curve of
arithmetic genus one.
\end{description}
\end{theorem}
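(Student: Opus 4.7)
The plan is to reduce to a question about finite point configurations via iterated general hyperplane sections, perform the count there, and then trace the equality cases back through the reductions.

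For any nondegenerate $Y\subset\P^N$ and a general hyperplane $H$, the sequence
\[
0 \to \mathcal{I}_Y(1) \to \mathcal{I}_Y(2) \to \mathcal{I}_{Y\cap H,\,H}(2) \to 0
\]
combined with $H^0(\mathcal{I}_Y(1))=0$ yields $a_2(Y)\le a_2(Y\cap H)$, and $Y\cap H$ remains nondegenerate of the same degree and codimension. Iterating $n$ times I pass first through a curve section $C\subset\P^{c+1}$ and then to a general zero-dimensional section $\Gamma\subset\P^c$ consisting of $d$ points, which by the general position theorem are in uniform position. Thus $a_2(X)\le a_2(C)\le a_2(\Gamma)$.

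For the point count I place $c+1$ spanning points of $\Gamma$ at the coordinate points $[0{:}\cdots{:}1{:}\cdots{:}0]$; a quadric $\sum a_{ij}x_ix_j$ vanishes at all of them iff every $a_{ii}=0$, imposing exactly $c+1$ independent conditions and giving Castelnuovo's bound $a_2(\Gamma)\le\binom{c+2}{2}-(c+1)=\binom{c+1}{2}$. For equality in (1), the base locus of the diagonal-free quadrics in $\P^c$ is exactly the $c+1$ coordinate points --- any further point with two nonzero coordinates $x_i, x_j$ is cut out by $x_ix_j$ --- so $a_2(\Gamma)=\binom{c+1}{2}$ forces $d=c+1$. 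For (2), an extra point $[a_0{:}\cdots{:}a_c]$ with all $a_i\neq 0$ imposes the single further condition $\sum_{i<j}a_{ij}a_ia_j=0$, giving $a_2(\Gamma)=\binom{c+1}{2}-1$ when $d=c+2$; uniform position then rules out larger $d$ by showing that the base locus of the quadrics through $c+2$ general points is again precisely those $c+2$ points. A Castelnuovo-type Hilbert function calculation on the curve section $C$ of degree $d=c+2$ identifies $p_a(C)=1$, and the absence of loss in $a_2(C)\le a_2(\Gamma)$ forces linear normality.

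The main obstacle I anticipate is controlling the cohomological jumps in the reductions: both converse directions need $H^1(\mathcal{I}_Y(1))=0$ at each intermediate step, which is projective normality of the successive sections. Extracting this from the quadric bound itself --- rather than assuming it --- is the delicate point, particularly in (2), where one must rule out the possibility that $C$ is a non-linearly-normal projection of an elliptic normal curve from a higher-dimensional ambient space. This is exactly where the refined uniform-position analysis of Fano enters.
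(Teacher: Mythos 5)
Your reduction $a_2(X)\le a_2(C)\le a_2(\Gamma)$ and the coordinate-point count reproduce the paper's fundamental inequality (the display (1.1), obtained from a general zero-dimensional section in general position), and your base-locus argument does correctly yield the \emph{necessity} statements: $a_2(X)=\binom{c+1}{2}$ forces $d=c+1$, and $a_2(X)=\binom{c+1}{2}-1$ with $c\ge 2$ forces $d=c+2$. But the substantive half of both parts --- the sufficiency directions, and in (2) the identification of the general curve section as a \emph{linearly normal} curve of arithmetic genus one --- is exactly what you defer to ``the delicate point,'' and no argument is actually supplied there. Concretely: (a) to get $a_2(X)=\binom{c+1}{2}$ from $d=c+1$ you must show that every intermediate general section is $2$-normal, i.e.\ that varieties of minimal degree are arithmetically Cohen--Macaulay, and likewise for del Pezzo varieties in (2). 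This is the ``structure theory of low degree varieties'' (\cite{HSV}, \cite{Fu1}, \cite{BS2}) which the paper invokes through the equivalence recorded in Proposition \ref{prop:degree bound}.(2) ($d=c+k$ and $a_2(X)=\binom{c+1}{2}+1-k$ if and only if ${\rm depth}(X)=n+1$); it does not fall out of the point count.

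(b) Your claim that ``the absence of loss in $a_2(C)\le a_2(\Gamma)$ forces linear normality'' does not work as stated. When $a_2(C)=a_2(\Gamma)$, the restriction sequence only gives that the map $H^1(\mathcal{I}_C(1))\to H^1(\mathcal{I}_C(2))$ is injective; to conclude $H^1(\mathcal{I}_C(1))=0$ you additionally need $H^1(\mathcal{I}_C(2))=0$, e.g.\ a $3$-regularity statement for $C$, which you have not established. (Compare the proof of Proposition \ref{prop:depth for d=c+3} in the paper, where precisely this extra regularity input is secured before the same exact sequence is used.) Without (b) you cannot exclude that $C$ is a non-linearly-normal curve of degree $c+2$ in $\P^{c+1}$ --- an isomorphic projection of an elliptic normal curve, or a rational curve of that degree --- and the fact that such curves have $a_2$ strictly below $\binom{c+1}{2}-1$ is equivalent to the ACM characterization you are missing, so the argument is circular at this point. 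The upper-bound half of your proposal is sound and matches the paper's method; the classification half is a genuine gap.
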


Varieties appearing in Theorem \ref{thm:known results 1
classical}.(2) are nowadays called \textit{del Pezzo varieties}, due
to T. Fujita (cf. \cite[Chapter I. $\S~6$]{Fu3}). They are exactly
\textit{varieties of almost minimal degree} which are arithmetically
Cohen-Macaulay (cf. \cite[Theorem 6.2]{BS2}).

In \cite{Pa3}, the author approached this problem through a more
basic method of ``hyperplane section argument". Indeed, let $\Gamma
\subset \P^c$ be a general zero dimensional linear section of $X$.
Thus it is a finite set of $d$ points in general position. Also let
$\Gamma_0$ be a subset of $\Gamma$ such that $|\Gamma_0 | =
\mbox{min} \{d, 2c+1 \}$. Then $\Gamma_0 \subset \P^c$ is $2$-normal
and hence
\begin{equation*}
a_2 (\Gamma_0 ) = {{c+2} \choose {2}}- |\Gamma_0|.
\end{equation*}
Also it holds that $a_2 (X) \leq a_2 (\Gamma) \leq a_2 (\Gamma_0)$.
These elementary observations give us the following useful
inequality:
\begin{equation}\label{eq:fundamental inequality}
a_2 (X)   \leq {{c+1} \choose {2}}-\mbox{min} \{d-c-1,c \}
\end{equation}
(cf. \cite[Proposition 4.2 and Corollary 4.3]{Pa3}). For example, it
implies that the value of $a_2 (X)$ is at most ${{c+1} \choose {2}}$ and equality can
occur only when $d=c+1$. Also it shows that if $a_2 (X) = {{c+1}
\choose {2}}-1$ then $d \leq c+2$. In \cite[Theorem 4.5]{Pa3},
Theorem \ref{thm:known results 1 classical} was reproved and
generalized to the next case by combining (\ref{eq:fundamental
inequality}) and the structure theory of ``low degree varieties"
(eg. \cite{HSV}, \cite{BS1}, \cite{BS2}, etc).

\begin{theorem}[Theorem 1.3.(4) in \cite{Pa3}]\label{thm:known results 2
Park2015}
Let $X \subset \P^{n+c}$ be as above. If $c \geq 3$, then $a_2 (X) =
{{c+1} \choose {2}}-2$ if and only if either $d=c+2$ and ${\rm
depth}(X)=n$ or else $d=c+3$ and ${\rm depth}(X)=n+1$.
\end{theorem}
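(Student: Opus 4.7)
The plan is to combine the fundamental inequality (\ref{eq:fundamental inequality}) with the structure theory of varieties of small degree via an iterated hyperplane-section argument. Substituting $a_2(X) = {{c+1}\choose{2}} - 2$ into (\ref{eq:fundamental inequality}) and using $c \geq 3$ forces $d - c - 1 \leq 2$, so $c + 1 \leq d \leq c + 3$. The case $d = c + 1$ is excluded by Theorem \ref{thm:known results 1 classical}.(1), leaving only $d \in \{c+2, c+3\}$ to analyze.

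The link between $a_2(X)$ and $a_2(\Gamma)$ is controlled by the exact sequences
\[
0 \to \mathcal{I}_{Y_i}(1) \to \mathcal{I}_{Y_i}(2) \to \mathcal{I}_{Y_{i+1}, H_{i+1}}(2) \to 0,
\]
where $Y_0 = X$, $Y_{i+1} = Y_i \cap H_{i+1}$ for a general flag of hyperplanes, and $Y_n = \Gamma \subset \P^c$. Passing to cohomology gives connecting maps $\partial_i \colon H^0(\mathcal{I}_{Y_{i+1}}(2)) \to H^1(\mathcal{I}_{Y_i}(1))$ and the identity
\[
a_2(\Gamma) - a_2(X) \;=\; \sum_{i=0}^{n-1} \dim \operatorname{Im}(\partial_i) \;\geq\; 0,
\]
with equality whenever $X$ is arithmetically Cohen-Macaulay, since then all intermediate $H^1(\mathcal{I}_{Y_i}(1))$ vanish.

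For $d = c+2$, $a_2(\Gamma) = {{c+2}\choose{2}} - (c+2) = {{c+1}\choose{2}} - 1$. The Brodmann-Schenzel / Fujita classification of varieties of almost minimal degree splits $X$ into the del Pezzo case (ACM, $\depth(X) = n+1$), for which $a_2(X) = a_2(\Gamma) = {{c+1}\choose{2}} - 1$ by Theorem \ref{thm:known results 1 classical}.(2), and the case where $X$ is the isomorphic projection of a variety $X' \subset \P^{n+c+1}$ of minimal degree from a point $p \notin X'$, in which $\depth(X) = n$; tracking quadrics through this projection yields $a_2(X) = a_2(\Gamma) - 1 = {{c+1}\choose{2}} - 2$. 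Hence for $d = c+2$ the equality $a_2(X) = {{c+1}\choose{2}} - 2$ is equivalent to $\depth(X) = n$. For $d = c+3$, $a_2(\Gamma) = {{c+1}\choose{2}} - 2$, so the bound is attained iff every $\partial_i$ vanishes, which holds automatically when $X$ is ACM (i.e.\ $\depth(X) = n+1$). The converse direction, that any non-ACM such $X$ has $a_2(X) < {{c+1}\choose{2}} - 2$, rests on the classification of degree-$(\codim + 3)$ varieties in \cite{HSV, BS1, BS2}, through which one verifies that some $\partial_i$ is nonzero.

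The main obstacle is this last non-vanishing claim: for a non-arithmetically-Cohen-Macaulay $X$ of degree $c+3$, one must show the drop $a_2(\Gamma) - a_2(X)$ is strictly positive, which is not a formal consequence of $\depth(X) < n+1$ alone. Securing it, together with the exact drop of $1$ for projected varieties of almost minimal degree in the $d = c+2$ case, requires fine control of $H^1(\mathcal{I}_X(1))$ via the structure theorems for low-degree varieties and the preparatory lemmas from \cite{Pa3}.
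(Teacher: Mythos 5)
First, a point of reference: the paper does not prove this statement at all---it is imported verbatim from \cite{Pa3} (Theorem 1.3.(4) there), and the present paper only records the ingredients of that proof, namely Theorem \ref{thm:VAMD review}.(1) for $d=c+2$ and Proposition \ref{prop:degree bound} for the degree bound and the $d=c+3$ case. Your skeleton (use (\ref{eq:fundamental inequality}) to force $d\in\{c+2,c+3\}$, then compare $a_2(X)$ with $a_2(\Gamma)$ via hyperplane sections and invoke the structure theory of low-degree varieties) is indeed the intended route, and your reduction to $d\in\{c+2,c+3\}$ and the computations $a_2(\Gamma)={{c+1} \choose {2}}-1$ resp.\ ${{c+1} \choose {2}}-2$ are correct.

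There are, however, two genuine gaps. (1) In the case $d=c+2$ you assert that every non--del Pezzo variety of almost minimal degree, i.e.\ every $X=\pi_p(\widetilde{X})$, is an \emph{isomorphic} projection with ${\rm depth}(X)=n$, whence $a_2(X)=a_2(\Gamma)-1$. This is false. By Theorem \ref{thm:VAMD review}.(2), ${\rm depth}(X)=\dim\Sigma_p(\widetilde{X})+2$, which ranges from $1$ (that is the case where $\pi_p$ actually is an isomorphism) up to $n+1$ (the non-normal del Pezzo varieties, which are arithmetically Cohen--Macaulay yet still of projection type), and correspondingly $a_2(X)={{c+1} \choose {2}}+{\rm depth}(X)-n-2$ by Theorem \ref{thm:VAMD review}.(1). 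Your dichotomy therefore misplaces the non-normal del Pezzos and, more seriously, silently omits the varieties with ${\rm depth}(X)\leq n-1$: for instance ${\rm depth}(X)=n-1$ gives $a_2(X)={{c+1} \choose {2}}-3$ (this is exactly case $(i)$ of Theorem \ref{first main theorem}), and such varieties must be excluded by an argument, not by assumption. The clean fix is simply to quote the depth formula of Theorem \ref{thm:VAMD review}.(1). (2) For $d=c+3$ you correctly isolate the hard implication, namely that a non--arithmetically-Cohen--Macaulay $X$ of degree $c+3$ has some nonzero connecting map $\partial_i$ and hence $a_2(X)<{{c+1} \choose {2}}-2$, but you only label it ``the main obstacle'' and defer it to the literature without an argument. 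That implication is precisely Proposition \ref{prop:degree bound}.(2) (Corollary 4.4 of \cite{Pa3}); without establishing it, your treatment of $d=c+3$ proves only the easy direction.
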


Along this line, this paper is intended to study the following three problems induced from (\ref{eq:fundamental inequality}):\\
\begin{enumerate}
\item[(1)] Classify all $X \subset \P^{n+c}$ satisfying the condition $a_2 (X) \leq {{c+1} \choose
{2}}-3$.
\item[(2)] By (\ref{eq:fundamental inequality}), if $d \geq 2c+1$ then the value of $a_2 (X)$ is at most ${{c} \choose {2}}$. Then classify all $X \subset \P^{n+c}$ such that $d \geq 2c+1$ and $a_2
(X) \leq {{c} \choose {2}} - 3$.
\item[(3)] For a projective variety $X \subset \P^{n+c}$, we
denote by $\textsf{Q}(X)$ the zero set of $I(X)_2$ in $\P^{n+c}$.
Namely, $\textsf{Q}(X) := \mbox{Bs}(|\mathcal{I}_X (2)|)$. Then investigate the irreducible decomposition of $\textsf{Q}(X)$ when $X \subset \P^{n+c}$ appears as an answer for the previous two problems (1) and
(2).\\
\end{enumerate}

Our first main result in this paper is the generalization of Theorem
\ref{thm:known results 2 Park2015} to the next case and hence it
gives an answer for Problem (1). The \textit{sectional genus} of
$X$, denoted by $g(X)$, is defined to be the arithmetic genus of a
general linear curve section of $X$.

\begin{theorem}\label{first main theorem}
Let $X \subset \P^{n+c}$ be as above. If $c \geq 4$, then $a_2 (X) =
{{c+1} \choose {2}}-3$ if and only if $X$ satisfies one of the
following conditions:
\begin{enumerate}
\item[$(i)$] $d=c+2$ and ${\rm depth}(X)=n-1$ $($and hence $n \geq 2$$)$;
\item[$(ii)$] $d=c+3$ and $X$ is a variety of maximal sectional
regularity;
\item[$(iii)$] $d=c+3$, $g(X)=1$ and ${\rm depth}(X)=n$;
\item[$(iv)$] $d=c+4$ and ${\rm depth}(X)=n+1$.
\end{enumerate}
\end{theorem}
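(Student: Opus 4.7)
The strategy is to combine the fundamental inequality (\ref{eq:fundamental inequality}) with a hyperplane-section analysis that relates $a_2(X)$ to $\depth(X)$, extending the method used in \cite{Pa3} for Theorem \ref{thm:known results 2 Park2015}.

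First, applying (\ref{eq:fundamental inequality}) under the hypothesis $a_2(X) = \binom{c+1}{2} - 3$ and $c \geq 4$ forces $\min\{d - c - 1,\, c\} \leq 3$, hence $d - c - 1 \leq 3$, so $d \in \{c+2,\, c+3,\, c+4\}$ (the case $d = c+1$ is excluded because then $X$ has minimal degree and $a_2(X) = \binom{c+1}{2}$). I would then fix a general complete flag $X = X_n \supset X_{n-1} \supset \cdots \supset X_0 = \Gamma$ of linear sections, so $\Gamma$ consists of $d \leq 2c+1$ points in linear general position; thus $\Gamma$ is $2$-normal and $a_2(\Gamma) = \binom{c+2}{2} - d$. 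Telescoping the cohomology of the short exact sequences $0 \to \mathcal{I}_{X_i}(1) \to \mathcal{I}_{X_i}(2) \to \mathcal{I}_{X_{i-1}}(2) \to 0$ and tracking when the connecting maps vanish (which they should in the present range of $d$, by the explicit structure of the intermediate sections) ought to yield
\begin{equation*}
a_2(\Gamma) - a_2(X) \;=\; \sum_{i=1}^{n} h^1\bigl(\mathcal{I}_{X_i}(1)\bigr) \;=\; (n+1) - \depth(X).
\end{equation*}

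Next I would proceed case-by-case on $d$. If $d = c+4$, then $a_2(\Gamma) = \binom{c+1}{2} - 3 = a_2(X)$, so the depth defect vanishes and $\depth(X) = n+1$, giving (iv). If $d = c+2$, then $a_2(\Gamma) = \binom{c+1}{2} - 1$ forces the defect to be $2$, so $\depth(X) = n-1$, and combining this with the Brodmann--Schenzel classification of varieties of almost minimal degree produces (i). If $d = c+3$, then $a_2(\Gamma) = \binom{c+1}{2} - 2$ and the defect equals $1$, so $\depth(X) = n$; I would then split by the sectional genus $g(X)$, which in degree $c+3$ can only be $0$ or $1$. The value $g(X) = 0$ is equivalent to the general curve section being a rational curve of regularity $d - c + 1 = 4$, i.e., maximal sectional regularity, producing (ii); the value $g(X) = 1$ produces (iii). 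The converse direction in each case would follow by a direct computation of $a_2(X)$ from the explicit structural models (Fujita for del-Pezzo-like cases, Brodmann--Schenzel for non-ACM almost-minimal-degree cases, and \cite{Pa3} for the maximal sectional regularity case).

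The main obstacle I expect is the sharp form of the displayed depth formula when $\depth(X) < n+1$. The connecting maps $H^1(\mathcal{I}_{X_i}(1)) \to H^1(\mathcal{I}_{X_i}(2))$ need not vanish a priori, and without their vanishing the formula collapses to an inequality, which would not suffice to pin down $\depth(X)$ exactly. Controlling these intermediate cohomology groups requires the geometry of varieties of degree $\leq c+4$ at every intermediate dimension; the most delicate instance is case (ii), where the general curve section is a non-normal rational curve of maximal regularity, and the quadrics vanishing on $X$ must be carefully matched against those of its sectional curve.
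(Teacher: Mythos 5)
Your skeleton---restricting to $d \in \{c+2,c+3,c+4\}$ via the fundamental inequality and then arguing degree-by-degree through general linear sections---matches the paper's. But the engine you propose, the displayed identity
\begin{equation*}
a_2(\Gamma) - a_2(X) \;=\; \sum_{i=1}^{n} h^1\bigl(\mathcal{I}_{X_i}(1)\bigr) \;=\; (n+1) - \depth(X),
\end{equation*}
is false, and it fails precisely in one of the cases being classified. Take $n=1$ and let $X=\mathcal{C}$ be a smooth rational curve of degree $c+3$ in $\P^{c+1}$ with a $4$-secant line (case $(ii)$). Then $h^1(\mathcal{I}_{\mathcal{C}}(1)) = h^0(\mathcal{O}_{\P^1}(c+3)) - (c+2) = 2$, whereas $a_2(\Gamma)-a_2(\mathcal{C}) = \bigl(\binom{c+1}{2}-2\bigr)-\bigl(\binom{c+1}{2}-3\bigr)=1$ and $(n+1)-\depth(\mathcal{C}) = 2-1 = 1$. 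So the connecting map $H^1(\mathcal{I}_{\mathcal{C}}(1)) \to H^1(\mathcal{I}_{\mathcal{C}}(2))$ has one-dimensional kernel rather than vanishing, and the middle term agrees with neither end. Structurally, $\depth(X)$ is controlled by the vanishing of $H^1(\mathcal{I}_X(j))$ for \emph{all} $j$ together with the higher $H^i(\mathcal{O}_X(j))$, so no telescoping of the single twist $1 \to 2$ can compute the depth defect. The ``obstacle'' you flag at the end is therefore not a technical nuisance but the actual content of the theorem. The paper replaces your formula by: (a) the classification of curves with $a_2 = \binom{c+1}{2}-3$ (Proposition \ref{prop:curve case}, imported from \cite{Pa3}); (b) Lemma \ref{lem:hyperplane}, showing $a_2$ is preserved under general hyperplane section once $d \geq c+3$; and (c) a separate, genuinely delicate argument (Proposition \ref{prop:depth for d=c+3}) pinning down $\depth(X)=n$ when $d=c+3$ by first proving linear normality and then propagating $k$-normality from the $3$-regularity of the curve section.

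Two further slips in the $d=c+3$ case. Castelnuovo's bound permits a nondegenerate integral curve of degree $c+3$ in $\P^{c+1}$ of arithmetic genus $2$, so ``$g(X)$ can only be $0$ or $1$'' is not automatic; it is the hypothesis $a_2 = \binom{c+1}{2}-3$, via the curve classification, that excludes genus $2$. Likewise $g(X)=0$ is not by itself equivalent to maximal sectional regularity (a rational curve section need not carry a $4$-secant line); again the classification of Proposition \ref{prop:curve case} is what supplies the secant line. For $d=c+2$ your conclusion is correct, but it should be read off directly from the Hoa--St\"uckrad--Vogel formula $a_2(X) = \binom{c+1}{2}+\depth(X)-n-2$ (Theorem \ref{thm:VAMD review}.(1)) rather than from the telescoping identity; note that for $d=c+2$ the quantity $a_2$ is \emph{not} preserved under general hyperplane section, so flag arguments are especially unreliable there.
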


The proof of this result is provided in $\S~3$.

See $\S~2$ where we review basic properties of varieties of
almost minimal degree and varieties of maximal sectional regularity
(including their definition).

Next, we consider the inequality (\ref{eq:fundamental inequality})
for the cases $d \geq 2c+1$. Then $a_2 (X)$ is at most ${{c} \choose
{2}}$ and it takes the possible maximal value ${{c} \choose {2}}$ if
$X$ is contained in a variety of minimal degree as a divisor. It is
one of the most beautiful applications of the classical Castelnuovo
Lemma that the converse is also true if $d \geq 2c+3$. Also Fano's
generalization of Castelnuovo Lemma implies that if $d
\geq 2c+5$ then $a_2 (X) = {{c} \choose {2}}-1$ if and only if $X$
is contained in an $(n+1)$-dimensional del Pezzo variety. In this
direction, we obtain the following our second main result.

\begin{theorem}\label{second main theorem}
Let $k$ be an integer in $\{ 0,1,2,3 \}$. Suppose that $c \geq k+2$
if $0 \leq k \leq 2$ and $c \geq 6$ if $k=3$. Let $X \subset
\P^{n+c}$ be a nondegenerate irreducible projective variety of
dimension $n$ and degree $d \geq 2c+2k+3$. Then the following two
conditions are equivalent:
\begin{enumerate}
\item[$(i)$] $a_2 (X) = {{c} \choose {2}}-k$.
\item[$(ii)$] There exists a unique $(n+1)$-dimensional variety $Y
\subset \P^{n+c}$ such that $X \subset Y$ and $a_2 (Y) = {{c}
\choose {2}}-k$.
\end{enumerate}
\end{theorem}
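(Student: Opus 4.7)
The plan is to treat the two implications separately, by rather different techniques.

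For the implication (ii) $\Rightarrow$ (i), my approach is a divisor-theoretic argument applied to $Y$, which has codimension $c - 1$ in $\P^{n+c}$. The classification theorems in one lower codimension are exactly what the hypotheses on $c$ enable me to invoke: Theorem \ref{thm:known results 1 classical}.(1) for $k = 0$, Theorem \ref{thm:known results 1 classical}.(2) for $k = 1$, Theorem \ref{thm:known results 2 Park2015} for $k = 2$, and Theorem \ref{first main theorem} for $k = 3$. In every case these yield the bound $\deg Y \leq c + k$. The inclusion $I(Y)_2 \subset I(X)_2$ gives $a_2(X) \geq a_2(Y) = \binom{c}{2} - k$ directly. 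For the reverse inequality, I would observe that any quadric $Q$ containing $X$ but not $Y$ would cut $Y$ in an effective divisor $Q \cap Y$ of degree $2 \deg Y \leq 2(c + k) < 2c + 2k + 3 \leq \deg X$, which is incompatible with the prime divisor $X$ of $Y$ being a component of $Q \cap Y$. Consequently $I(X)_2 = I(Y)_2$ and $a_2(X) = \binom{c}{2} - k$.

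For the converse (i) $\Rightarrow$ (ii), I would exhibit $Y$ as the unique $(n+1)$-dimensional irreducible component of the quadratic base locus $\textsf{Q}(X)$ containing $X$. Taking a general codimension-$n$ linear subspace $\Lambda \subset \P^{n+c}$, the uniform position principle presents $\Gamma := X \cap \Lambda$ as a set of $d \geq 2c + 2k + 3$ points in general position in $\Lambda \cong \P^c$. The standard inequality $a_2(X) \leq a_2(\Gamma)$ combined with (\ref{eq:fundamental inequality}) applied to $\Gamma \subset \P^c$ shows that $a_2(\Gamma)$ lies in the interval $[\binom{c}{2} - k, \binom{c}{2}]$. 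The zero-dimensional classification of finite sets of points with many independent quadrics --- classical for $k = 0, 1$ (Castelnuovo Lemma and Fano's theorem) and established under the given codimension hypotheses for $k = 2, 3$ --- then forces $\Gamma$ to lie on an irreducible curve $C_0 \subset \Lambda$ of degree $c + k'$ for some $k' \leq k$, with $\textsf{Q}(\Gamma) = C_0$. This curve $C_0$ is the general linear section of the sought-for $Y$: setting $Y$ to be the unique $(n+1)$-dimensional irreducible component of $\textsf{Q}(X)$ through $X$, one has $Y \cap \Lambda = C_0$ for generic $\Lambda$, pinning down $\dim Y = n + 1$. Applying the divisor argument from the first direction to $X \subset Y$ (still valid because $\deg Y \leq c + k'$ is dominated by $\deg X$) gives $I(X)_2 = I(Y)_2$ and hence $a_2(Y) = \binom{c}{2} - k$. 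Uniqueness of $Y$ follows because any other $(n+1)$-dimensional irreducible variety $Y'$ with $X \subset Y'$ and $a_2(Y') = a_2(X)$ would satisfy $Y' \cap \Lambda = C_0 = Y \cap \Lambda$ for generic $\Lambda$, while two distinct irreducible $(n+1)$-dimensional subvarieties cannot share a general codimension-$n$ linear section.

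The main obstacle is the zero-dimensional base case for $k = 2, 3$, namely the classification of finite sets $\Gamma \subset \P^c$ of $d \geq 2c + 2k + 3$ points in general position with $a_2(\Gamma)$ close to the maximum $\binom{c}{2}$. Unlike the classical Castelnuovo ($k = 0$) and Fano ($k = 1$) theorems, for $k = 2, 3$ this requires a careful analysis of the Hilbert function of $\Gamma$ and a precise identification of the irreducible curve $C_0$ realizing the bound. The codimension hypotheses $c \geq k + 2$ (and the stronger $c \geq 6$ when $k = 3$) are calibrated exactly so that the attendant classification of curves of almost minimal degree on which $\Gamma$ lies is unambiguous and the subtypes appearing in Theorem \ref{first main theorem} do not collapse into one another. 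Once this step is secured, the lift to dimension $n + 1$ is essentially formal through the divisor-counting argument, a dimension count for $\textsf{Q}(X)$, and the uniform position principle.
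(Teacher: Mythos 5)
Your overall architecture coincides with the paper's: for (ii) $\Rightarrow$ (i) you bound $\deg Y \leq c+k$ via the codimension-$(c-1)$ classification and then apply Bezout to force $I(X)_2 = I(Y)_2$, exactly as sketched in the paper's introduction; for (i) $\Rightarrow$ (ii) you cut down to a general zero-dimensional section $\Gamma \subset \P^c$, invoke Castelnuovo--Fano--Eisenbud--Harris--Petrakiev to produce a curve of degree $\leq c+k$ through $\Gamma$, and lift it to an $(n+1)$-dimensional component of $\textsf{Q}(X)$ by a dimension count. That existence part is sound and is precisely the paper's proof of Theorem \ref{thm:irreducible decomposition 2}. (One incidental error: your claim $\textsf{Q}(\Gamma) = C_0$ is false in general; the paper only gets $I(\Gamma)_2 = I(C_0)_2$, and $\textsf{Q}(C_0)$ may strictly contain $C_0$ --- e.g.\ when $C_0$ is a curve of maximal regularity one has $\textsf{Q}(C_0) = C_0 \cup \F(C_0)$.)

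The genuine gap is in the uniqueness of $Y$. You assert that any competitor $Y'$ satisfies $Y' \cap \Lambda = C_0$ for general $\Lambda$, but you give no reason. Both $Y' \cap \Lambda$ and $C_0$ are irreducible nondegenerate curves of degree $\leq c+k$ in $\P^c$ containing the $d \geq 2c+2k+3$ points of $\Gamma$, and in $\P^c$ with $c \geq 3$ Bezout gives no usable bound on the number of points two distinct irreducible curves of these degrees can share (two space curves of degree $c+k$ can meet in far more than $2c+2k+3$ points), so ``they share $\Gamma$, hence they coincide'' does not follow. One also cannot argue via $Y' \cap \Lambda \subset \textsf{Q}(\Gamma) = \textsf{Q}(C_0)$, because $\textsf{Q}(X) \cap \Lambda$ can strictly contain $\textsf{Q}(\Gamma)$ whenever $a_2(\Gamma) > a_2(X)$, which is not excluded. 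The paper closes exactly this gap with Theorem \ref{thm:irreducible decomposition 1} (the content of all of \S 4, resting on the analysis of $\textsf{Q}$ for divisors of rational normal scrolls and for curves of arithmetic genus one): applied to the $(n+1)$-fold $Y$, which has codimension $c-1$ and $a_2(Y) = \binom{(c-1)+1}{2} - k$, it shows $Y$ is the \emph{only} nondegenerate irreducible component of $\textsf{Q}(Y) = \textsf{Q}(X)$ of dimension $\geq n+1$; since any competitor $Y'$ satisfies $I(Y')_2 = I(X)_2$ and is therefore itself such a component (its dimension being pinned at $n+1$ by the $a_2$ count of Proposition \ref{prop:irreducible decomposition}), uniqueness follows. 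Your proposal never invokes Theorem \ref{thm:irreducible decomposition 1} or any substitute for it, and without that ingredient the uniqueness half of (ii), and hence also the implication (i) $\Rightarrow$ (ii) as stated, is not proved.
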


The proof of this result is provided in $\S~5$. See Theorem \ref{thm:irreducible decomposition 2}.

In the proof of Theorem \ref{second main theorem}, the existence of
$Y$ is shown by using the Castelnuovo theory, including I.
Petrakiev's recent result in \cite{Pe}. See Theorem
\ref{thm:Castelnuovo Theory}.

Once we know the existence of an $(n+1)$-dimensional variety $Y$, it
follows by Bezout's theorem that $Y$ is an irreducible component of
$\textsf{Q}(X) = \mbox{Bs}(|\mathcal{I}_X (2)|)$. So, to finish the
proof of Theorem \ref{second main theorem} by showing the uniqueness
of $Y$, we investigate $\textsf{Q}(X)$ when $X$ is one of the
varieties listed in Theorem \ref{thm:known results 1 classical},
Theorem \ref{thm:known results 2 Park2015} and Theorem \ref{first main theorem}.
Some of them are cut out by quadrics and hence $\textsf{Q}(X)$ is
$X$ itself. On the other hand, $\textsf{Q}(X)$ can be strictly
bigger than $X$ for some $X$ (eg. see Proposition \ref{prop:VMSR
d=c+3}.(3) and Remark \ref{rmk:trivial cases}). Our third main
result is about a common property of $\textsf{Q}(X)$ for all those $X$'s.

\begin{theorem}\label{thm:irreducible decomposition 1}
Suppose that $k \in \{0,1,2,3\}$ and $c \geq k+1$. If $X \subset
\P^{n+c}$ is an $n$-dimensional nondegenerate irreducible projective
variety such that
\begin{equation*}
a_2 (X) = {{c+1} \choose {2}}-k,
\end{equation*}
then $X$ is the only nondegenerate irreducible component of
$\textsf{Q}(X)$ of dimension $\geq n$.
\end{theorem}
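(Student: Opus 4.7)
The plan is to combine the classification of the varieties $X$ with $a_2(X)\geq\binom{c+1}{2}-3$ furnished by Theorems \ref{thm:known results 1 classical}, \ref{thm:known results 2 Park2015} and \ref{first main theorem} with the structural description of $\textsf{Q}(X)$ available for each resulting type of $X$. The starting observation is that, for \emph{any} irreducible component $Y$ of $\textsf{Q}(X)$, every quadric containing $X$ also contains $Y$, so $I(X)_2\subseteq I(Y)_2$ and hence
\[
a_2(Y)\ \geq\ a_2(X)\ =\ \binom{c+1}{2}-k.
\]

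First I would eliminate nondegenerate irreducible components $Y$ of $\textsf{Q}(X)$ of dimension strictly greater than $n$. Any such $Y$ has codimension at most $c-1$ in $\P^{n+c}$, so the Castelnuovo bound (Theorem \ref{thm:known results 1 classical}(1)) gives $a_2(Y)\leq\binom{c}{2}=\binom{c+1}{2}-c$, and combining with the display above forces $c\leq k$, contradicting the hypothesis $c\geq k+1$. In particular $X$ itself is an irreducible component of $\textsf{Q}(X)$, since otherwise the component containing $X$ would be nondegenerate of dimension $>n$. It remains to show that no nondegenerate irreducible component $Y\neq X$ of $\textsf{Q}(X)$ has $\dim Y = n$. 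For such a $Y$, the fundamental inequality \eqref{eq:fundamental inequality} applied to $Y$, together with $c\geq k+1$, forces $\deg Y\leq c+k+1$; thus $Y$ is itself one of the varieties listed in the three classification theorems.

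The remaining argument is a case-by-case inspection of $\textsf{Q}(X)$ for each type on the classification list. When $X$ is a variety of minimal degree, a del Pezzo variety, or the arithmetically Cohen--Macaulay variety of case (iv) of Theorem \ref{first main theorem}, the homogeneous ideal $I(X)$ is generated in degree $2$, so $\textsf{Q}(X)=X$ and there is nothing to check. In the remaining, non-Cohen--Macaulay situations (i.e.\ $d=c+2$ with $\depth(X)\in\{n-1,n\}$, and cases (ii) and (iii) of Theorem \ref{first main theorem}) the structural results reviewed in Section 2 describe $\textsf{Q}(X)$ as the union of $X$ with a single proper linear subspace $\Lambda\subsetneq\P^{n+c}$ — the secant locus of an external projection centre, or the $\F$-locus of a variety of maximal sectional regularity in case (ii). Since any proper linear subspace of $\P^{n+c}$ is degenerate, $\Lambda$ cannot supply the unwanted nondegenerate component $Y$, which concludes the argument.

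The main obstacle is the uniform verification, across the four cases (i)--(iv) of Theorem \ref{first main theorem}, that the ``extra'' components of $\textsf{Q}(X)$ are always proper linear subspaces. The VMSR case (ii) should follow from Proposition \ref{prop:VMSR d=c+3}(3); the subtler cases are (i) with $\depth(X)=n-1$ and (iii), where $X$ arises as a (possibly iterated) non-Cohen--Macaulay projection of a variety of minimal degree and the secant locus of the projection centre must be identified as the only other linear component of $\textsf{Q}(X)$. Once this identification is in place, the dimension bound from Step 1 and the degree bound from Step 2 combine with the degeneracy of $\Lambda$ to finish each case.
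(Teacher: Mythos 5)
Your overall skeleton is sound and matches the paper's strategy in spirit: kill nondegenerate components of dimension $>n$ by the Castelnuovo bound (this is exactly the argument of Proposition \ref{prop:irreducible decomposition}), reduce to the classification list, and then inspect $\textsf{Q}(X)$ case by case. The ACM case (your ``minimal degree / del Pezzo / case (iv)'' bucket) and the maximal-sectional-regularity case via Proposition \ref{prop:VMSR d=c+3}(3) are handled correctly.

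However, there is a genuine gap in the remaining cases, and it is precisely where the paper's Section 4 does its real work. Your claim that for $d=c+2$ with $\depth(X)\in\{n-1,n\}$ and for case (iii) the set $\textsf{Q}(X)$ is ``$X$ union a single proper linear subspace $\Lambda$, the secant locus of the projection centre or the $\F$-locus'' is both unproven and incorrect as stated. For $d=c+2$ the secant locus $\Sigma_p(\widetilde{X})$ lives on $\widetilde{X}$ and its image is $\Sing(X)\subset X$; it is not an extra component of $\textsf{Q}(X)$. What the paper actually proves (Proposition \ref{prop:quadratic scheme divisor of SRNS case}) is that $X$ sits as a divisor of class $H+bF$ on an $(n+1)$-fold scroll $Y$ and that $\textsf{Q}(X)\subseteq X\cup X_0$, where $X_0$ is a \emph{degenerate sub-scroll} $S(a_1,\dots,a_k)$ --- degenerate, but not in general a linear space, and identified only after a divisor-class computation with the complementary sub-scrolls $X_i$. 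Worse, for case (iii) ($d=c+3$, $g(X)=1$) no such linear description is available at all: the paper must pass to a general curve section $\mathcal{C}=\pi_P(\tilde{\mathcal{C}})$, place $\tilde{\mathcal{C}}$ on a smooth rational normal surface scroll $\tilde S$, project to a surface $S$ of almost minimal degree inside a $3$-fold scroll $T$, and then show by a divisor-class argument ($\tilde{\mathcal{C}}\cup\tilde D\equiv aH+bF$ with $a\geq 3$ forces $a_2(S)=a_2(\mathcal{C})>\binom{c}{2}$, a contradiction) that no second nondegenerate component $D$ can exist (Proposition \ref{prop:quadratic scheme d=c+3 curve}). Your proposal acknowledges this verification as ``the main obstacle'' but supplies neither the scroll-divisor computation nor the elliptic-curve argument, and the structural shortcut you propose in their place does not hold. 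Finally, your Step 2 (the degree bound $\deg Y\leq c+k+1$ for a hypothetical $n$-dimensional component $Y\neq X$) is correct but idle: knowing $Y$ is on the classification list does not by itself exclude it.
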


The proof of this result is provided in $\S~4$.

The organization of this paper is as follows. In $\S 2$, we review
basic properties of varieties of almost minimal degree and varieties
of maximal sectional regularity. In $\S 3$, we give a proof of
Theorem \ref{first main theorem}. Finally, $\S 4$ and $\S 5$ are
devoted to investigating $\textsf{Q}(X)$ when $X$ is one of the
varieties listed in Theorem \ref{thm:known results 1 classical},
\ref{thm:known results 2 Park2015} and \ref{first main theorem}.\\

\noindent {\bf Acknowledgement.} This work was supported by the
Korea Research Foundation Grant funded by the Korean Government (no.
2018R1D1A1B07041336).

\section{Some projective varieties having large $a_2 (X)$}
\noindent This section is devoted to reviewing some known results
about varieties of almost minimal degree and varieties of maximal
sectional regularity since they appear in our main results in the
present paper. Throughout this section, let
\begin{equation*}
X \subset \P^{n+c}
\end{equation*}
be a nondegenerate projective variety of dimension $n$ and degree
$d$.

\subsection{Varieties of almost minimal degree} Due to \cite{BS2}, we say that $X \subset \P^{n+c}$ is a \textit{variety of almost
minimal degree} if $d = c+2$. The results of \cite{Fu1} and
\cite{Fu2} imply that these varieties can be divided into two
classes:
\begin{enumerate}
\item[$1.$] $X \subset \P^{n+c}$ is linearly normal and $X$ is normal;
\item[$2.$] $X \subset \P^{n+c}$ is non-linearly normal or $X$ is non-normal.
\end{enumerate}
By \cite[Theorem 2.1 b)]{Fu1}, see also $(6.4.6)$ and $(9.2)$ in
\cite{Fu}, $X$ is of the first type if and only if it is a normal
del Pezzo variety. By \cite[Theorem 2.1 a)]{Fu1}, $X$ is of the
second type if and only if
\begin{equation*}
X = \pi_p (\widetilde{X})
\end{equation*}
where $\widetilde{X} \subset \P^{n+c+1}$ is an $n$-dimensional
variety of minimal degree and $\pi_p : \widetilde{X} \rightarrow
{\mathbb P}^r$ is the linear projection of $\widetilde{X}$ from a
closed point $p$ in ${\mathbb P}^{n+c+1} \setminus \widetilde{X}$.
In this case, one can naturally expect that all the properties of
$X$ may be precisely described in terms of the relative location of
$p$ with respect to $\widetilde{X}$. For the cohomological and local
properties of $X$, this expectation turns out to be true in
\cite{BS2}. Indeed those properties are governed by the
\textit{secant locus} $\Sigma_p (\widetilde{X})$ of $\widetilde{X}$
with respect to $p$, which is defined to be the scheme-theoretic
intersection of $\widetilde{X}$ and the union of all secant lines to
$X$ passing through $p$. In \cite{BP1}, the authors obtained a
classification theory of the second type by describing the so-called
\textit{secant stratification} of $\widetilde{X} \subset \P^{n+c+1}$ in
terms of the secant locus $\Sigma_p (\widetilde{X})$.

\begin{theorem}\label{thm:VAMD review}
Suppose that $c \geq 3$ and let $X \subset \P^{n+c}$ be such that $X
= \pi_p (\widetilde{X})$ where $\widetilde{X} \subset {\mathbb
P}^{n+c+1}$ is an $n$-dimensional variety of minimal degree and $p$
is a closed point in ${\mathbb P}^{n+c+1} \setminus \widetilde{X}$.
Then
\smallskip

\renewcommand{\descriptionlabel}[1]%
             {\hspace{\labelsep}\textrm{#1}}
\begin{description}
\setlength{\labelwidth}{13mm} \setlength{\labelsep}{1.5mm}
\setlength{\itemindent}{0mm}

\item[$(1)$] $($\cite[Theorem A and B]{HSV}$)$ $a_2 (X) = {{c+1} \choose {2}} + \mbox{depth}(X)-n-2$.

\item[$(2)$] $($\cite[Theorem 1.3]{BS2}$)$ $\mbox{depth}(X) = \mbox{dim}~\Sigma_p (\widetilde{X})+2 =
\mbox{dim}~ \mbox{Sing}(X) +2$.

\item[$(3)$] $($\cite[Theorem 1.4]{BS2} and \cite[Theorem 5.5 and 5.8]{BP2}$)$ $X$ is contained in an
$(n+1)$-dimensional rational normal scroll $Y$ such that
$\mbox{Sing}(X)=\mbox{Vert}(Y)$.
\smallskip

\item[$(4)$] $($\cite[Proposition 3.2]{BP1}$)$ $\mbox{dim}~\mbox{Vert}(\tilde{X}) \leq  \mbox{dim}~\Sigma_p
(\widetilde{X}) \leq \mbox{dim}~\mbox{Vert}(\tilde{X}) +3$.
\end{description}
\end{theorem}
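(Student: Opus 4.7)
The plan is to prove the four parts in order, in each case exploiting the explicit structure of $\widetilde{X}$ as a variety of minimal degree together with the description $X = \pi_p(\widetilde{X})$; the complete control available for $\widetilde{X}$ (2-linear Eagon--Northcott-type resolution, scroll or Veronese structure, and well-understood secant geometry) is the engine throughout.

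For part $(1)$, I would begin from $a_2(\widetilde{X}) = \binom{c+2}{2}$, which follows from the 2-linear resolution of $\widetilde{X} \subset \P^{n+c+1}$. Pulling back through the linear projection realises $H^0(\mathcal{I}_X(2))$ as the space of quadrics in $H^0(\mathcal{I}_{\widetilde{X}}(2))$ vanishing doubly at $p$, and the cone/projection exact sequence identifies the discrepancy $\binom{c+1}{2} - a_2(X)$ with a graded piece of the local cohomology module $H^1_{\mathfrak{m}}(R/I_X)$. Depth--cohomology theory then says this module vanishes in degree $2$ exactly when $\depth(X) = n+2$, and a direct count in the intermediate depth regime yields the linear correction $\depth(X) - n - 2$.

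For parts $(2)$ and $(3)$, I would proceed geometrically. The singular locus of $X$ consists precisely of those images $\pi_p(q)$ with $q \in \Sigma_p(\widetilde{X})$ for which $\pi_p$ either identifies $q$ with a second point of $\widetilde{X}$ or ramifies there; hence $\pi_p$ collapses $\Sigma_p(\widetilde{X})$ generically 2-to-1 onto $\Sing(X)$, yielding $\dim \Sing(X) = \dim \Sigma_p(\widetilde{X})$. Combining this equality with $(1)$ gives the second equality in $(2)$. For $(3)$, I would invoke the fact that any nondegenerate variety of minimal degree other than the Veronese surface sits as a divisor in an $(n+1)$-dimensional rational normal scroll $\widetilde{Y}$; projecting $\widetilde{Y}$ from $p$ produces an $(n+1)$-dimensional scroll $Y \supset X$, and a direct check shows $\mbox{Vert}(Y) = \pi_p(\mbox{Vert}(\widetilde{Y}) \cup \Sigma_p(\widetilde{X})) = \Sing(X)$.

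For part $(4)$, the lower bound is immediate: $\mbox{Vert}(\widetilde{X}) \subseteq \Sigma_p(\widetilde{X})$ because any line from $p$ through a vertex point is automatically a trivial secant. The upper bound is the delicate point and uses the ruling structure of $\widetilde{X}$: the fibre of $\Sec(\widetilde{X}) \to \widetilde{X}$ over $p$ can exceed $\mbox{Vert}(\widetilde{X})$ by at most three dimensions, corresponding to the maximal jump allowed when $p$ lies on a positive-dimensional family of trisecant planes. The main obstacle I anticipate is part $(1)$: making the correspondence between the global invariant $a_2(X)$ and the local invariant $\depth(X)$ genuinely numerical rather than just qualitative requires a precise local-cohomology comparison, which is the technical heart of \cite{HSV}.
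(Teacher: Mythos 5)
First, note that the paper does not prove Theorem \ref{thm:VAMD review} at all: it is a survey statement, with each item quoted from the literature ((1) from \cite{HSV}, (2) and (3) from \cite{BS2} and \cite{BP2}, (4) from \cite{BP1}). So there is no ``paper proof'' for your argument to match; you are proposing to reprove, in a few lines each, four substantial theorems whose original proofs occupy large parts of those references. Judged on its own terms, your sketch has a genuine gap at the heart of part (2). The equality $\depth(X)=\dim\Sigma_p(\widetilde{X})+2$ is the main content of \cite[Theorem 1.3]{BS2}, and your proposed derivation does not reach it: the generically two-to-one collapsing argument gives (at best) $\dim\Sing(X)=\dim\Sigma_p(\widetilde{X})$, and ``combining with (1)'' merely translates the desired identity into the equivalent claim $a_2(X)=\binom{c+1}{2}+\dim\Sigma_p(\widetilde{X})-n$; nowhere do you compute $a_2(X)$, or any other cohomological invariant of $X$, in terms of the secant locus. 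Closing that gap is exactly the computation of the local cohomology modules $H^i_{\mathfrak m}$ of the coordinate ring of $X$ in terms of $\Sigma_p(\widetilde{X})$ carried out in \cite{BS2}, and no rearrangement of (1) substitutes for it.

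Two further steps would fail as written. In (3), you project an ambient scroll $\widetilde{Y}\supset\widetilde{X}$ from $p$, but $p$ is only assumed to avoid $\widetilde{X}$, not $\widetilde{Y}$; the case $p\in\widetilde{Y}$ is unaddressed, and even when $p\notin\widetilde{Y}$ the asserted identity $\mbox{Vert}(Y)=\pi_p(\mbox{Vert}(\widetilde{Y})\cup\Sigma_p(\widetilde{X}))$ is not justified --- the vertex of $\pi_p(\widetilde{Y})$ is governed by the position of $p$ relative to the secant and tangent loci of $\widetilde{Y}$, not of $\widetilde{X}$, and pinning down $\mbox{Vert}(Y)=\Sing(X)$ is precisely the delicate content of \cite[Theorems 5.5 and 5.8]{BP2}. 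In (4), the upper bound $\dim\Sigma_p(\widetilde{X})\le\dim\mbox{Vert}(\widetilde{X})+3$ is asserted via ``the fibre of $\Sec(\widetilde{X})\to\widetilde{X}$ over $p$,'' which is not a well-defined map; the actual proof in \cite{BP1} classifies the possible secant loci of rational normal scrolls (quadrics inside linear subspaces of bounded dimension over the vertex), and that classification is where the constant $3$ comes from. Part (1) and the lower bound in (4) are correct in outline, with (1) acknowledged, rightly, as resting on the local-cohomology analysis of \cite{HSV}.
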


Note that the cases where $\mbox{depth}(X)=n$ and $n-1$ appear in Theorem
\ref{thm:known results 2 Park2015} and Theorem \ref{first main
theorem}. In the following corollary, we show that if $X$ is not a
cone then the dimension of $X$ has an upper bound.

\begin{corollary}\label{cor:depth n and n-1}
Let $X \subset \P^{n+c}$ be as in Theorem \ref{thm:VAMD review}.
Suppose that $X$ is not a cone $($or, equivalently, $\tilde{X}$ is
not a cone$)$. Then
\smallskip

\renewcommand{\descriptionlabel}[1]%
             {\hspace{\labelsep}\textrm{#1}}
\begin{description}
\setlength{\labelwidth}{13mm} \setlength{\labelsep}{1.5mm}
\setlength{\itemindent}{0mm}

\item[$(1)$] If $\mbox{depth}(X)=n$, then $n \leq 4$.

\item[$(2)$] If $n \geq 2$ and $\mbox{depth}(X)=n-1$, then $n \leq 5$.
\end{description}
\end{corollary}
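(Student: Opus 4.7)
The plan is to combine parts $(2)$ and $(4)$ of Theorem \ref{thm:VAMD review} and exploit the fact that $\widetilde{X}$ being a non-cone means $\mathrm{Vert}(\widetilde{X}) = \emptyset$, so $\dim \mathrm{Vert}(\widetilde{X}) = -1$ under the usual convention. By Theorem \ref{thm:VAMD review}$(4)$, this forces
\begin{equation*}
\dim \Sigma_p(\widetilde{X}) \leq \dim \mathrm{Vert}(\widetilde{X}) + 3 = 2.
\end{equation*}

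Next, I would translate the depth hypotheses into constraints on $\dim \Sigma_p(\widetilde{X})$ via Theorem \ref{thm:VAMD review}$(2)$, which gives the identity $\mathrm{depth}(X) = \dim \Sigma_p(\widetilde{X}) + 2$. For part $(1)$, the hypothesis $\mathrm{depth}(X) = n$ yields $\dim \Sigma_p(\widetilde{X}) = n - 2$; combined with the bound $\dim \Sigma_p(\widetilde{X}) \leq 2$, this gives $n \leq 4$. For part $(2)$, the hypothesis $\mathrm{depth}(X) = n - 1$ yields $\dim \Sigma_p(\widetilde{X}) = n - 3$, and the same bound gives $n \leq 5$.

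There is essentially no obstacle: the corollary is a direct numerical consequence of the two cited results, and the only point requiring mild care is the convention $\dim \emptyset = -1$ for the vertex of a non-cone, which is standard and matches the statement of Theorem \ref{thm:VAMD review}$(4)$. The equivalence of ``$X$ is a cone'' with ``$\widetilde{X}$ is a cone'' under the projection $\pi_p$ is used implicitly (and is noted parenthetically in the statement), so no further argument is needed.
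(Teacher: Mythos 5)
Your proposal is correct and follows essentially the same route as the paper: the paper likewise observes that $\dim\operatorname{Vert}(\tilde{X})=-1$ (justified there via smoothness of $\tilde{X}$, which for a variety of minimal degree is equivalent to not being a cone), then combines Theorem \ref{thm:VAMD review}.(2) and (4) to get $\operatorname{depth}(X)-2=\dim\Sigma_p(\widetilde{X})\leq 2$ and reads off both bounds. No issues.
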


\begin{proof}
Since $\tilde{X}$ is smooth, we have
$\mbox{dim}~\mbox{Vert}(\tilde{X})=-1$. Thus we have
\begin{equation}\label{eq:depth formula}
-1 \leq \mbox{dim}~\Sigma_p (\widetilde{X}) = \mbox{depth}(X)-2 \leq
2
\end{equation}
by Theorem \ref{thm:VAMD review}.$(2)$ and $(4)$.
\smallskip

\noindent $(1)$ By (\ref{eq:depth formula}), we have $n \leq 4$ if
$\mbox{depth}(X)=n$.
\smallskip

\noindent $(2)$ By (\ref{eq:depth formula}), we have $n \leq 5$ if
$\mbox{depth}(X)=n-1$.
\end{proof}

\subsection{Varieties of maximal sectional regularity} Let
\begin{equation*}
\mathcal{C} \subset \P^{1+c}
\end{equation*}
be a general linear curve section of $X$. By \cite{GLP}, the
Castelnuovo-Mumford regularity $\reg (\mathcal{C})$ is at most
$d-c+1$. Due to \cite{BLPS1}, we say that $X \subset \P^{n+c}$ is a
\textit{variety of maximal sectional regularity} if $d \geq c + 3$
and $\reg (\mathcal{C})$ is equal to the maximal possible value $d
-c+1$. By the classification result in \cite{BLPS1}, if $c \geq 4$ then $X$
is a variety of maximal sectional regularity if and only if there is
an $n$-dimensional linear subspace $\mathbb{F}(X) \subset \P^r$ such
that $X \cap \mathbb{F}(X) \subset \mathbb{F}(X)$ is a hypersurface
of degree $d-c+1$. Note that this hypersurface gives a $(d-c+1)$-secant line to $\mathcal{C}$ for all general linear curve section $\mathcal{C}$ of $X$.

Due to Theorem \ref{first main theorem}, we are interested in the
case where $d=c+3$.

\begin{proposition}\label{prop:VMSR d=c+3}
Suppose that $c \geq 4$ and let $X \subset \P^{n+c}$ be an
$n$-dimensional variety of maximal sectional regularity and of
degree $d=c+3$. Then
\smallskip

\renewcommand{\descriptionlabel}[1]%
             {\hspace{\labelsep}\textrm{#1}}
\begin{description}
\setlength{\labelwidth}{13mm} \setlength{\labelsep}{1.5mm}
\setlength{\itemindent}{0mm}

\item[$(1)$] $\mbox{depth}(X)=n$ and $\mbox{depth}(X \cup \F
(X))=n+1$.
\item[$(2)$] $X \cup \F(X)$ is ideal theoretically cut out by
quadrics.
\item[$(3)$] $I(X)_2 = I(X \cup \F(X))_2$ and hence $\textsf{Q}(X) = X \cup \F(X)$.
\end{description}
\end{proposition}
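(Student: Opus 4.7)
The plan is to prove (3) first by a secant-line argument, and then reduce (1) and (2) to the curve case $n=1$ treated in \cite{BLPS1} via a Bertini-type induction.

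For (3), let $\mathcal{C} \subset \P^{c+1}$ be a general linear curve section of $X$, and let $L = \F(X) \cap \P^{c+1}$ be the corresponding $(d-c+1)=4$-secant line to $\mathcal{C}$. For any quadric $Q \supset X$ we have $\mathcal{C} \subset Q$, so $Q \cap L$ contains the four points of $\mathcal{C} \cap L$; since a line meets a quadric in at most two points unless it is contained in it, $L \subset Q$. As $\mathcal{C}$ varies over general linear curve sections, these $4$-secant lines sweep out $\F(X)$, so $\F(X) \subset Q$. Thus $I(X)_2 = I(X \cup \F(X))_2$, and the identification $\textsf{Q}(X) = X \cup \F(X)$ follows once (2) is established.

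For (1) and (2), set $Y := X \cup \F(X)$ and induct on $n$. The base case $n = 1$ is supplied by \cite{BLPS1}: the reducible curve $\mathcal{C} \cup L \subset \P^{c+1}$ is arithmetically Cohen--Macaulay and its saturated homogeneous ideal is generated in degree $2$. For the inductive step, a standard transversality argument shows that for a general hyperplane $H \subset \P^{n+c}$ the intersection $Y \cap H$ decomposes scheme-theoretically as $(X \cap H) \cup (\F(X) \cap H)$, where $X \cap H$ is again an $(n-1)$-dimensional variety of maximal sectional regularity of degree $c+3$ with $\F(X \cap H) = \F(X) \cap H \cong \P^{n-1}$, and $(X \cap H) \cap \F(X \cap H)$ is a quartic hypersurface in $\F(X \cap H)$. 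Since a general hyperplane section drops depth by exactly one and preserves the property of being ideal-theoretically cut out by quadrics, the induction yields both $\depth(Y) = n+1$ and (2).

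It remains to show $\depth(X) = n$. This is read off from the long exact cohomology sequence of the Mayer--Vietoris sequence
\begin{equation*}
0 \to \mathcal{O}_Y \to \mathcal{O}_X \oplus \mathcal{O}_{\F(X)} \to \mathcal{O}_{X \cap \F(X)} \to 0,
\end{equation*}
together with the facts that $Y$ is arithmetically CM (just proved), that $\F(X) \cong \P^n$ and the quartic hypersurface $X \cap \F(X) \subset \F(X)$ are arithmetically CM: the sequence forces $H^i_*(\mathcal{O}_X) = 0$ for $1 \leq i \leq n-2$, while the connecting map together with the nonvanishing of $H^{n-1}_*(\mathcal{O}_{X \cap \F(X)})$ prevents $X$ from itself being ACM, pinning the depth at exactly $n$. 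The main obstacle will be the transversality/Bertini verification in the inductive step: one must check that a general hyperplane cuts $X$, $\F(X)$, and their intersection transversally, so that the induction actually produces a VMSR of the same degree one dimension lower and the quadric-generation passes cleanly between $Y$ and $Y \cap H$.
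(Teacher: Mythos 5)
Your proof of part (3) takes a genuinely different and more elementary route than the paper's. The paper obtains $I(X)_2 = I(X \cup \F(X))_2$ by comparing $a_2$'s through a general surface section, invoking \cite[Proposition 4.3]{BLPS2}; your Bezout argument --- every quadric through $X$ meets the $4$-secant line $\F(X) \cap \P^{c+1}$ in a scheme of length $\geq 4$ and hence contains it, and these lines sweep out a dense subset of $\F(X)$ --- is self-contained and correct, and it establishes the first half of (3) without any input from (1) or (2). For (1) and (2) your hyperplane-section induction is close in spirit to the paper, which cuts directly to a general surface section and quotes \cite[Theorem 4.1]{BLPS2} for $n=2$ and \cite[Proposition 3.5]{BS1} for $n=1$; the paper also sidesteps both your transversality concern and your reliance on quadric generation for the reducible curve $\mathcal{C}\cup L$ by instead passing to the general zero-dimensional section of $X \cup \F(X)$, which consists of $c+4 \leq 2c$ points in general position by \cite[Theorem 1.1]{BS3}, so that \cite[Theorem 1]{GL} applies and quadric generation lifts using the arithmetic Cohen--Macaulayness of $X\cup\F(X)$.

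The genuine gap is the upper bound $\depth(X) \leq n$, i.e.\ the claim that $X$ is not arithmetically Cohen--Macaulay. Your Mayer--Vietoris sequence does give $H^i_*(\mathcal{O}_X)=0$ for $1 \leq i \leq n-2$ together with the $j$-normality of $X$ (so $\depth(X) \geq n$), but it does not ``prevent $X$ from being ACM'': at level $i=n-1$, using $H^{n-1}(\mathcal{O}_{\F(X)}(j))=0$, the sequence only yields
\begin{equation*}
0 \rightarrow H^{n-1}(\mathcal{O}_X(j)) \rightarrow H^{n-1}(\mathcal{O}_{X\cap\F(X)}(j)) \rightarrow H^{n}(\mathcal{O}_Y(j)) \rightarrow \cdots ,
\end{equation*}
and the nonvanishing of the middle term is perfectly compatible with $H^{n-1}_*(\mathcal{O}_X)=0$, since that term may simply inject into $H^n_*(\mathcal{O}_Y)$, which is itself nonzero for $j \ll 0$. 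No contradiction comes out of the connecting map alone; an external input is required. The cleanest repair is the one the paper implicitly uses: a general linear curve section $\mathcal{C}$ of $X$ is a curve of maximal regularity of degree $c+3$, and such a curve is not arithmetically Cohen--Macaulay by \cite[Proposition 3.5]{BS1}; since ACM-ness passes to general hyperplane sections, $X$ cannot be ACM. Alternatively, one can combine your part (3) with Proposition \ref{prop:degree bound}.(2): if $\depth(X)=n+1$ then $a_2(X)={{c+1} \choose {2}}-2$, which is incompatible with every quadric through $X$ containing the degree-$(c+4)$ scheme $X\cup\F(X)$. Either way, this step needs to be supplied rather than read off from the exact sequence.
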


\begin{proof}
$(1)$ For $n=1$ and $n=2$, see respectively \cite[Proposition
3.5]{BS1} and \cite[Theorem 4.1]{BLPS2}. For $n \geq 3$, let $S = X
\cap \P^{2+c}$ be a general linear surface section of $X$. Then $S$
is a surface of maximal sectional regularity and $\F(S) = \F(X) \cap
\P^{2+c}$. In particular,
\begin{equation*}
(X \cup \F(X)) \cap \P^{2+c} = S \cup \F (S).
\end{equation*}
Therefore it follows that
\begin{equation*}
\mbox{depth}(X) = \mbox{depth} (S)+(n-2) =n
\end{equation*}
and
\begin{equation*}
\mbox{depth}(X \cup \F (X)) = \mbox{depth} (S\cup \F (S))+(n-2)
=n+1.
\end{equation*}

\noindent $(2)$ Let $\Gamma \subset \P^c$ be a general zero
dimensional linear section of $X \cup \F (X)$. Then $\Gamma$ is a
finite set of $c+4$ points in general position by \cite[Theorem
1.1]{BS3}. Since $c \geq 4$, we have $|\Gamma| \leq 2c$ and hence
$I(\Gamma)$ is generated by quadrics (cf. \cite[Theorem 1]{GL}).
This shows that $X \cup \F(X)$ is ideal theoretically cut out by
quadrics since $\mbox{depth}(X \cup \F (X))=n+1$.

\noindent $(3)$ By $(1)$ and \cite[Proposition 4.3]{BLPS2}, we have
\begin{equation*}
a_2 (X) = a_2 (S) = a_2 (S \cup \F (S)) = a_2 (X \cup \F (S)).
\end{equation*}
Obviously, this shows that $I(X)_2$ is equal to $I(X \cup \F(X))_2$.
\end{proof}

\section{Proof of Theorem \ref{first main theorem}}
\noindent This section is devoted to proving Theorem \ref{first main
theorem}. We begin with two useful known facts.

\begin{proposition}[Corollary 4.3 and 4.4 in \cite{Pa3}]\label{prop:degree bound}
Let $c$ and $k$ be integers with $1 \leq k \leq c$ and let $X \subset \P^{n+c}$ be a nondegenerate projective variety of dimension $n$ and degree $d$.
\smallskip

\renewcommand{\descriptionlabel}[1]%
             {\hspace{\labelsep}\textrm{#1}}
\begin{description}
\setlength{\labelwidth}{13mm} \setlength{\labelsep}{1.5mm}
\setlength{\itemindent}{0mm}

\item[$(1)$] If $a_2 (X) = {{c+1} \choose
{2}}+ 1-k$, then $d \leq c+k$.

\item[$(2)$] If $d=c+k$, then
\begin{equation*}
a_2 (X) = {{c+1} \choose {2}}+ 1-k \quad \Leftrightarrow \quad
\mbox{depth}(X)=n+1.
\end{equation*}
\end{description}
\end{proposition}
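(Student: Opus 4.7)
For part~(1), my strategy is to feed the hypothesis $a_2(X)=\binom{c+1}{2}+1-k$ directly into the fundamental inequality~(\ref{eq:fundamental inequality}). This yields $\min\{d-c-1,\,c\}\leq k-1$, and since $k\leq c$ the minimum cannot be realized by $c$, forcing $d-c-1\leq k-1$, i.e.\ $d\leq c+k$.

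For part~(2), let $\Gamma\subset\P^c$ be a general zero-dimensional linear section. Under the hypothesis $d=c+k\leq 2c$, the subset $\Gamma_0$ from the introduction coincides with $\Gamma$, and its $2$-normality gives
\[
a_2(\Gamma)\;=\;\binom{c+2}{2}-d\;=\;\binom{c+1}{2}+1-k.
\]
Since the chain $a_2(X)\leq a_2(\Gamma)$ is already known, the proposition reduces to the equivalence
\[
a_2(X)\;=\;a_2(\Gamma)\;\Longleftrightarrow\;\mathrm{depth}(X)=n+1.
\]
I would analyze this by forming the chain of general linear sections $X=X_n\supset X_{n-1}\supset\cdots\supset X_0=\Gamma$, each $X_i$ nondegenerate in $\P^{i+c}$, and for every $i\geq 1$ using the exact sequence
\[
0\to \mathcal{I}_{X_i}(1)\to \mathcal{I}_{X_i}(2)\to \mathcal{I}_{X_{i-1}/\P^{i-1+c}}(2)\to 0.
\]
Nondegeneracy gives $H^0(\mathcal{I}_{X_i}(1))=0$, and the associated long exact sequence shows that each step $a_2(X_i)\leq a_2(X_{i-1})$ is an equality precisely when the connecting map $\delta_i:H^1(\mathcal{I}_{X_i}(1))\to H^1(\mathcal{I}_{X_i}(2))$ is injective.

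The $(\Leftarrow)$ direction is immediate: $\mathrm{depth}(X)=n+1$ means $X$ is arithmetically Cohen--Macaulay, so $H^1_*(\mathcal{I}_X)=0$; this property passes to each general linear section $X_i$, hence $H^1(\mathcal{I}_{X_i}(1))=0$ for every $i\geq 1$, all the $\delta_i$ are trivially injective, and $a_2(X)=a_2(\Gamma)$.

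The $(\Rightarrow)$ direction is the main obstacle: promoting the single equality $a_2(X)=a_2(\Gamma)$ to the full vanishing $H^j_*(\mathcal{I}_X)=0$ for $1\leq j\leq n$ requires more than the injectivity of the connecting maps $\delta_i$ at a single twist. My plan is to exploit the constraint $d\leq 2c$ — under which $\Gamma\subset\P^c$ is scheme-theoretically cut out by its quadrics (\cite{GL}), and all of those quadrics lift to $X$ — together with a Castelnuovo--Mumford regularity bound on $X$ extracted from the sectional data. Combining these with a downward induction on $i$, using the exact sequence above at all twists $t\geq 1$ and Serre vanishing in high degree as the anchor, I would force all intermediate $H^j(\mathcal{I}_{X_i}(t))$ to vanish in the required range and thereby recover arithmetic Cohen--Macaulayness. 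The technically delicate part is controlling the two-variable cohomology $H^j(\mathcal{I}_{X_i}(t))$ simultaneously in $j$ and $t$, rather than only at $t=1,2$.
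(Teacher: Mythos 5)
Your part (1) is correct and is exactly the intended one-line deduction from the inequality (\ref{eq:fundamental inequality}): since $k\le c$, the minimum $\min\{d-c-1,c\}\le k-1$ cannot be realized by $c$, so $d\le c+k$. Your reduction of part (2) to the equivalence $a_2(X)=a_2(\Gamma)\Leftrightarrow\depth(X)=n+1$, and the $(\Leftarrow)$ direction, are also fine. (For the record, the paper itself offers no proof of this proposition --- it is imported verbatim from Corollaries 4.3 and 4.4 of \cite{Pa3} --- so the comparison is with the standard argument rather than with anything on the page.)

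The $(\Rightarrow)$ direction, however, is an announced plan rather than a proof, and the point you defer as ``controlling the two-variable cohomology $H^j(\mathcal{I}_{X_i}(t))$ simultaneously in $j$ and $t$'' is precisely where the content lies; as written this is a genuine gap. The mechanism that closes it is cleaner than a two-variable bookkeeping and does not need a regularity bound on $X$. Induct on $n$. For $n=1$, set $M=\bigoplus_t H^1(\P^{1+c},\mathcal{I}_{\mathcal{C}}(t))$. Since $|\Gamma|=d\le 2c$ and $\Gamma$ is in general position, $I(\Gamma)$ is \emph{generated} by quadrics by \cite{GL} (ideal-theoretic generation is essential here; the scheme-theoretic statement you invoke is too weak). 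The equality $a_2(\mathcal{C})=a_2(\Gamma)$ says $I(\mathcal{C})_2\to I(\Gamma)_2$ is onto; since $I(\Gamma)_t=S_{t-2}\cdot I(\Gamma)_2$ and linear forms lift, $I(\mathcal{C})_t\to I(\Gamma)_t$ is onto for \emph{every} $t\ge 2$. The long exact sequence then makes multiplication by the general linear form $M_{t-1}\to M_t$ injective for every $t$, and since $M_t=0$ for $t\gg 0$ this forces $M=0$, i.e.\ $\depth(\mathcal{C})=2$. For $n\ge 2$, the general hyperplane section $Y$ satisfies $a_2(X)\le a_2(Y)\le a_2(\Gamma)$, hence $a_2(Y)=a_2(\Gamma)$, so $Y$ is arithmetically Cohen--Macaulay by induction and $I(Y)$ is again generated by quadrics; the modules $H^i_*(\mathcal{I}_X)$ for $2\le i\le n$ vanish by the usual injectivity-plus-eventual-vanishing argument using $H^{i-1}_*(\mathcal{I}_Y)=H^i_*(\mathcal{I}_Y)=0$, and $H^1_*(\mathcal{I}_X)$ vanishes by the same quadric-lifting argument as in the curve case. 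Your sketch names the right ingredients (quadric generation of $I(\Gamma)$ and lifting) but never assembles them into the step that actually matters: upgrading surjectivity of restriction from degree $2$ to all degrees $t\ge 2$, which is what yields injectivity of multiplication on the deficiency module in every degree.
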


\begin{corollary}\label{cor:degree range}
Let $X \subset \P^{n+c}$ be a nondegenerate projective variety of
codimension $c \geq 4$ and degree $d$. If $a_2 (X) = {{c+1} \choose
{2}}-3$, then $c+2 \leq d \leq c+4$. 
\end{corollary}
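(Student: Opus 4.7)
The statement is a direct consequence of Proposition \ref{prop:degree bound} combined with the classical degree bound for nondegenerate varieties, so the plan is short.

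For the upper bound $d \leq c+4$, the plan is simply to invoke Proposition \ref{prop:degree bound}.(1) with $k = 4$. The hypothesis $c \geq 4$ is exactly what is needed to ensure that the parameter $k = 4$ lies in the admissible range $1 \leq k \leq c$ required by that proposition. Writing $a_2(X) = \binom{c+1}{2} - 3 = \binom{c+1}{2} + 1 - 4$ then yields $d \leq c + 4$ immediately.

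For the lower bound $d \geq c+2$, I would use the fact that any nondegenerate projective variety in $\P^{n+c}$ has degree at least $c+1$, together with Theorem \ref{thm:known results 1 classical}.(1). If $d = c+1$, then $X$ is a variety of minimal degree, so Castelnuovo's bound is attained, i.e.\ $a_2(X) = \binom{c+1}{2}$. This contradicts the assumption $a_2(X) = \binom{c+1}{2} - 3$. Hence $d \geq c+2$.

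There is essentially no obstacle: the whole statement is obtained by combining the two halves of Proposition \ref{prop:degree bound} (the first half for the upper bound, and the equivalence in the second half, applied to $d = c+1$, could alternatively be used in place of Castelnuovo to rule out the minimal degree case). The only point deserving attention is the bookkeeping check that $k = 4 \leq c$, which is precisely the role of the hypothesis $c \geq 4$.
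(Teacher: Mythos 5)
Your proposal is correct and follows essentially the same route as the paper: the upper bound $d\leq c+4$ via Proposition \ref{prop:degree bound}.(1) with $k=4$ (using $c\geq 4$ so that $k\leq c$), and the exclusion of $d=c+1$ via Castelnuovo's equality case in Theorem \ref{thm:known results 1 classical}.(1). The only quibble is your parenthetical alternative: Proposition \ref{prop:degree bound}.(2) alone does not rule out $d=c+1$ unless you additionally know that varieties of minimal degree are arithmetically Cohen--Macaulay, but since this is not part of your actual argument it does not affect correctness.
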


\begin{proof}
By Proposition \ref{prop:degree bound}.(1), it holds that $c+1 \leq d \leq c+4$. Also $d \neq c+1$ since if $d=c+1$, then $a_2 (X) = {{c+1} \choose {2}}$ by Theorem \ref{thm:known results 1 classical}.
\end{proof}

\begin{proposition}[Theorem 3.11.(4) in \cite{Pa3}]\label{prop:curve
case} Suppose that $c \geq 4$ and let $\mathcal{C} \subset \P^{c+1}$ be a nondegenerate projective integral curve of degree $d$. Then $a_2 (\mathcal{C}) = {{c+1} \choose {2}} -3$ if and
only if is either
    \begin{enumerate}
    \item[(i)] $d=c+3$ and $\mathcal{C}$ is a smooth rational curve having a $4$-secant line
    \end{enumerate}
    or
    \begin{enumerate}
    \item[(ii)] $d=c+3$ and $\mathcal{C}$ is the image of an isomorphic projection of a linearly normal curve of arithmetic genus $1$ from a point
    \end{enumerate}
    or else
    \begin{enumerate}
    \item[(iii)] $d=c+4$ and $\mathcal{C}$ is a linearly normal curve of arithmetic genus $3$.
    \end{enumerate}
\end{proposition}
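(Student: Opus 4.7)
My plan is to use Corollary~\ref{cor:degree range} to bound $d$ and then analyze each admissible value, combining the structural results of Section~2 with Proposition~\ref{prop:degree bound}.

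Corollary~\ref{cor:degree range} restricts $d$ to $\{c+2,\,c+3,\,c+4\}$. For $d=c+2$, the curve is either a linearly normal del Pezzo (elliptic) curve, with $a_2 = \binom{c+1}{2}-1$ by Theorem~\ref{thm:known results 1 classical}.(2), or a non-normal projection of a rational normal curve, in which case Theorem~\ref{thm:VAMD review}.(1) gives $a_2 = \binom{c+1}{2} + \mbox{depth}(\mathcal{C}) - 3 \in \{\binom{c+1}{2}-1,\,\binom{c+1}{2}-2\}$ (since $\mbox{depth}(\mathcal{C}) \in \{1,2\}$ for a curve). Neither value matches $\binom{c+1}{2}-3$, so this degree is excluded.

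For $d=c+4$, Proposition~\ref{prop:degree bound}.(2) with $k=4$ shows that $a_2(\mathcal{C}) = \binom{c+1}{2}-3$ is equivalent to $\mathcal{C}$ being aCM. Since aCM together with nondegeneracy forces linear normality, Riemann--Roch on the linearly normal embedding gives $g = h^1(\mathcal{O}_\mathcal{C}(1)) + 3 \geq 3$, while the Castelnuovo bound for curves of degree $c+4$ in $\P^{c+1}$ (with $c\geq 4$) gives $g \leq 3$. Together these force $g = 3$, yielding case~(iii); the converse is standard.

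The heart of the argument is $d=c+3$, where Proposition~\ref{prop:degree bound}.(2) gives $\mbox{depth}(\mathcal{C})=1$, so $\mathcal{C}$ is not aCM. From the sheaf sequence $0 \to \mathcal{I}_\mathcal{C}(2) \to \mathcal{O}_{\P^{c+1}}(2) \to \mathcal{O}_\mathcal{C}(2) \to 0$ one reads
\begin{equation*}
a_2(\mathcal{C}) = \binom{c+3}{2} - h^0(\mathcal{O}_\mathcal{C}(2)) + h^1(\mathcal{I}_\mathcal{C}(2)).
\end{equation*}
Castelnuovo bounds the genus by $g \leq 2$, so $\mathcal{O}_\mathcal{C}(2)$ is non-special and $h^0(\mathcal{O}_\mathcal{C}(2)) = 2c+7-g$; comparing with $\binom{c+1}{2}-3$ collapses the equation to $g + h^1(\mathcal{I}_\mathcal{C}(2)) = 1$. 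If $g=1$ (and $h^1(\mathcal{I}_\mathcal{C}(2))=0$), then Riemann--Roch gives $h^0(\mathcal{O}_\mathcal{C}(1)) = c+3$, so $\mathcal{C}$ is the isomorphic projection of a linearly normal elliptic curve in $\P^{c+2}$, giving case~(ii). If $g=0$ (and $h^1(\mathcal{I}_\mathcal{C}(2))=1$), then $\mathcal{C}$ is a smooth rational curve of degree $c+3$ that fails to be $3$-regular; the Gruson--Lazarsfeld--Peskine regularity characterization and the classification of \cite{BLPS1} then force the existence of a $(d-c+1)=4$-secant line, identifying $\mathcal{C}$ as a variety of maximal sectional regularity, which is case~(i). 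The converses are direct quadric counts.

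The main obstacle is the geometric identification in the $g=0$ branch: upgrading the cohomological inequality $h^1(\mathcal{I}_\mathcal{C}(2))\neq 0$ to the existence of a $4$-secant line on $\mathcal{C}$. The cleanest route is to combine \cite{GLP} with the classification in \cite{BLPS1}, which matches failure of $3$-regularity with the existence of a maximal multisecant line in exactly this codimension range.
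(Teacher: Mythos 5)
The paper does not actually prove this proposition; it is imported verbatim from \cite[Theorem 3.11.(4)]{Pa3}, so there is no in-paper argument to compare yours against. Taken on its own terms, your reconstruction of the forward implication is essentially correct: Corollary \ref{cor:degree range} pins $d$ to $\{c+2,c+3,c+4\}$; Theorem \ref{thm:VAMD review}.(1) rules out $d=c+2$ for curves (depth is $1$ or $2$, so $a_2 \in \{\binom{c+1}{2}-2,\binom{c+1}{2}-1\}$); Proposition \ref{prop:degree bound}.(2) with $k=4$ reduces $d=c+4$ to the aCM case, where linear normality plus Castelnuovo's genus bound $\pi(c+4,c+1)=3$ forces $g=3$; and for $d=c+3$ your identity $g+h^1(\mathcal{I}_{\mathcal{C}}(2))=1$ is right, with the $g=0$ branch correctly upgraded to a $4$-secant line via $\reg(\mathcal{C})=4$ (the GLP bound combined with failure of $3$-regularity) and the classification of curves of maximal regularity quoted in $\S 2.2$.

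The one place you are too quick is the claim that ``the converses are direct quadric counts.'' They are not. In case (ii) the count reduces to showing $h^1(\mathcal{I}_{\mathcal{C}}(2))=0$, i.e.\ that $2$-normality survives the isomorphic projection of the elliptic normal curve; this is a genuine theorem (it follows from property $N_{c-1}$ of elliptic normal curves together with \cite[Theorem 3.2]{BLPS2}, or indirectly by excluding $a_2\geq\binom{c+1}{2}-2$ via Theorems \ref{thm:known results 1 classical} and \ref{thm:known results 2 Park2015}, using that the projected curve is not linearly normal). In case (i) you need $h^1(\mathcal{I}_{\mathcal{C}}(2))$ to equal $1$ exactly, which requires ruling out the aCM possibility (note that aCM would force $h^1(\mathcal{I}_{\mathcal{C}}(2))=0$, contradicting the value $2$ that Proposition \ref{prop:degree bound}.(2) would then demand for a rational curve of degree $c+3$). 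In case (iii) you need projective normality of an integral linearly normal curve of arithmetic genus $3$ and degree $\geq 2g+2$. All three claims are true and provable with tools already quoted in the paper, so this is a fixable omission rather than a fatal one, but as written the converse half of the equivalence is asserted rather than established.
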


Now, we want to investigate a few common properties of nondegenerate
projective varieties $X \subset \P^{n+c}$ of codimension $c \geq 4$
such that $a_2 (X) = {{c+1} \choose {2}}-3$.

\begin{lemma}\label{lem:hyperplane}
Let $X \subset \P^{n+c}$ be a nondegenerate irreducible projective
variety of dimension $n \geq 2$, codimension $c \geq 4$ and degree
$d \geq c+3$ and let $Y \subset \P^{n-1+c}$ be a general hyperplane
section of $X$. If $a_2 (X) = {{c+1} \choose {2}}-3$, then $a_2 (Y)
= {{c+1} \choose {2}}-3$.
\end{lemma}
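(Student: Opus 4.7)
The plan is to combine the standard hyperplane-restriction sequence with the degree estimates from Proposition \ref{prop:degree bound} and Theorem \ref{thm:known results 2 Park2015}, and then to lift arithmetic Cohen--Macaulayness back up. For a general hyperplane $H \subset \P^{n+c}$, the scheme $Y = X \cap H \subset H$ is nondegenerate and, by Bertini together with $n \geq 2$, irreducible of dimension $n-1$, codimension $c$ and degree $d \geq c+3$. The short exact sequence
$$0 \to \mathcal{I}_X (1) \to \mathcal{I}_X (2) \to \mathcal{I}_Y (2) \to 0$$
on $\P^{n+c}$, combined with $h^0 (\mathcal{I}_X (1)) = 0$, yields at once $a_2 (X) \leq a_2 (Y)$; so the content of the lemma is the reverse inequality, that is, ruling out $a_2 (Y) \geq {{c+1} \choose {2}} - 2$.

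I would feed $Y$ into Proposition \ref{prop:degree bound}(1): if $a_2 (Y) = {{c+1} \choose {2}} + 1 - k$ with $k \in \{1,2,3\}$, then $d \leq c + k$, and $d \geq c+3$ leaves only the case $k = 3$, $d = c+3$, $a_2 (Y) = {{c+1} \choose {2}} - 2$. Theorem \ref{thm:known results 2 Park2015} (applicable since $c \geq 4 \geq 3$) now forces $\depth (Y) = n$, i.e., the coordinate ring of $Y$ is Cohen--Macaulay.

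The final step is to lift this back to $X$. Since $X$ is irreducible and reduced, its homogeneous coordinate ring $R$ is a domain, so a general linear form is a non-zero-divisor in $R$, giving $\depth (X) = \depth (Y) + 1 = n+1$. Thus $X$ itself is arithmetically Cohen--Macaulay, so in particular $H^1 (\P^{n+c}, \mathcal{I}_X (1)) = 0$, and the exact sequence above then forces $a_2 (X) = a_2 (Y) = {{c+1} \choose {2}} - 2$, contradicting the hypothesis $a_2 (X) = {{c+1} \choose {2}} - 3$. The principal subtlety is precisely this Cohen--Macaulay lifting together with its cohomological consequence; all other steps are direct applications of the quoted results.
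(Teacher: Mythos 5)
Your reduction follows the paper's: the restriction sequence gives $a_2(X)\le a_2(Y)$, Castelnuovo's bound together with Proposition \ref{prop:degree bound}.(1) eliminates $a_2(Y)\ge {{c+1}\choose{2}}-1$, and Theorem \ref{thm:known results 2 Park2015} converts the one remaining bad case $a_2(Y)={{c+1}\choose{2}}-2$, $d=c+3$ into the statement that $Y$ is arithmetically Cohen--Macaulay. The gap is in your lifting step. If $R=S/I(X)$ is the homogeneous coordinate ring of $X$ and $\ell$ is the general linear form, then regularity of $\ell$ gives $\depth(R/\ell R)=\depth(R)-1$, but $R/\ell R$ is \emph{not} the homogeneous coordinate ring of $Y$: the latter is the quotient of $R/\ell R$ by the finite-length submodule $H^0_{\mathfrak m}(R/\ell R)$, and that submodule is nonzero precisely when $\depth(R)=1$ --- which is the only case you actually have to handle, since $\depth(X)\ge 2$ already yields $H^1(\P^{n+c},\mathcal{I}_X(1))=0$ and hence $a_2(X)=a_2(Y)$. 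So the identity $\depth(X)=\depth(Y)+1$ does not follow from $\ell$ being a non-zero-divisor, and as written the argument assumes the point at issue exactly where it matters.

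The transfer you need is nonetheless true, and a short argument repairs it. Since $Y$ is arithmetically Cohen--Macaulay of dimension $n-1\ge 1$, we have $H^1(\P^{n-1+c},\mathcal{I}_Y(j))=0$ for all $j$. The long exact sequence of $0\to\mathcal{I}_X(j)\to\mathcal{I}_X(j+1)\to\mathcal{I}_Y(j+1)\to 0$ then shows that $h^1(\mathcal{I}_X(j))\ge h^1(\mathcal{I}_X(j+1))$ for every $j$; since $h^1(\mathcal{I}_X(j))=0$ for $j\le 0$ ($X$ being integral and connected), it vanishes for all $j$, so $X$ is linearly normal and $a_2(X)=a_2(Y)$, the contradiction you wanted. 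Note that this is precisely the depth comparison the paper itself invokes, in contrapositive form: there one observes that only the case $\depth(X)=1$ needs attention, asserts that then $\depth(Y)=1$, and contradicts $\depth(Y)=n\ge 2$ coming from Theorem \ref{thm:known results 2 Park2015}. Your proof is the paper's read backwards; only the justification of the depth transfer needs to be replaced.
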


\begin{proof}
Obviously, it holds that $a_2 (Y) \geq a_2 (X)$ and equality is
attained if ${\rm depth}(X) \geq 2$. So, we need to consider the
case where ${\rm depth}(X) = 1$ and hence ${\rm depth}(Y) = 1$. In
this case, if $a_2 (Y) \geq {{c+1} \choose {2}}-2$ then $d=c+3$ and
${\rm depth}(Y) = n \geq 2$ by Theorem \ref{thm:known results 1
classical} and \ref{thm:known results 2 Park2015}. Therefore it must
hold that $a_2 (Y) = {{c+1} \choose {2}}-3$.
\end{proof}

\begin{proposition}\label{prop:depth for d=c+3}
Let $X \subset \P^{c+n}$ be a nondegenerate irreducible projective
variety of codimension $c \geq 4$ and degree $d = c+3$ such that
$a_2 (X) = {{c+1} \choose {2}}-3$. Then ${\rm depth}(X) = n$.
\end{proposition}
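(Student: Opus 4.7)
The plan is to induct on $n = \dim X$. The base case $n = 1$ is immediate, since any nondegenerate irreducible projective curve has depth $1 = n$. For the inductive step $n \geq 2$, I would let $Y = X \cap H \subset \P^{n-1+c}$ be a general hyperplane section; by Bertini it is irreducible and nondegenerate of dimension $n-1$, codimension $c$ and degree $c+3$, and Lemma \ref{lem:hyperplane} propagates $a_2(Y) = \binom{c+1}{2} - 3$, so the inductive hypothesis gives $\mathrm{depth}(Y) = n - 1$.

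The upper bound $\mathrm{depth}(X) \leq n$ is free: Proposition \ref{prop:degree bound}.(2) with $k = 3$ shows that $\mathrm{depth}(X) = n+1$ would force $a_2(X) = \binom{c+1}{2} - 2$, a contradiction. For the reverse inequality I would invoke the standard formula for a general hyperplane section of a nondegenerate irreducible variety,
\[
\mathrm{depth}(S_Y) \;=\; \max\{1,\, \mathrm{depth}(S_X) - 1\}.
\]
When $n \geq 3$ one has $\mathrm{depth}(Y) = n - 1 \geq 2$, which forces $\mathrm{depth}(S_X) - 1 = n - 1$ and hence $\mathrm{depth}(X) = n$; the induction closes cleanly in this range.

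The genuine obstacle is the case $n = 2$, where $\mathrm{depth}(Y) = 1$ is compatible both with $\mathrm{depth}(X) = 1$ and with $\mathrm{depth}(X) = 2$, so the depth-of-hyperplane-section formula is not enough. To handle this I would descend one more step to the general linear curve section $\mathcal{C} \subset \P^{c+1}$, which by iterating Lemma \ref{lem:hyperplane} still satisfies $a_2(\mathcal{C}) = \binom{c+1}{2} - 3$. Proposition \ref{prop:curve case} then forces $\mathcal{C}$ to fall into one of two classes: either (i) a smooth rational curve of degree $c+3$ possessing a $4$-secant line, in which case $\mathcal{C}$ attains the maximal regularity $d - c + 1 = 4$ and $X$ is therefore a variety of maximal sectional regularity, so that Proposition \ref{prop:VMSR d=c+3}.(1) yields $\mathrm{depth}(X) = 2$; or (ii) the isomorphic projection of a linearly normal elliptic curve, in which case $g(X) = 1$ and one identifies $X$ with an analogous projection of a normal del Pezzo surface (a variety of almost minimal degree of the first type), reading off $\mathrm{depth}(X) = 2$ via the secant-locus formula of Theorem \ref{thm:VAMD review}.(2).

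The hardest technical point will be case (ii) in dimension $n = 2$: lifting the curve-level projection structure given by Proposition \ref{prop:curve case}.(ii) to a global structural description of the surface $X$, and then translating that information into the required depth statement via Theorem \ref{thm:VAMD review}. The remainder is routine bookkeeping with Lemma \ref{lem:hyperplane}, Proposition \ref{prop:degree bound} and the standard depth-of-hyperplane-section formula.
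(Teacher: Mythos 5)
Your overall skeleton (induct on $n$ via general hyperplane sections, use Lemma \ref{lem:hyperplane} to propagate $a_2=\binom{c+1}{2}-3$, reduce to $n\le 2$, and split the surface case according to Proposition \ref{prop:curve case}) matches the paper's, and your handling of the reduction for $n\ge 3$ and of the maximal-sectional-regularity branch is fine. The genuine gap is in the decisive case, namely case (ii) with $n=2$, where your proposed route is not just incomplete but points in the wrong direction. You want to ``identify $X$ with an analogous projection of a normal del Pezzo surface'' and read off the depth from the secant-locus formula of Theorem \ref{thm:VAMD review}.(2). First, that theorem concerns projections of varieties of \emph{minimal} degree (hence varieties with $d=c+2$); it says nothing about a projection of a del Pezzo surface, which would have $d=c+3$. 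Second, and worse, if $X$ really were an isomorphic projection of a linearly normal del Pezzo surface $\tilde X\subset\P^{c+3}$, then $h^0(\mathcal{O}_X(1))=h^0(\mathcal{O}_{\tilde X}(1))=c+4>h^0(\mathcal{O}_{\P^{c+2}}(1))$, so $X$ would fail to be linearly normal and would have ${\rm depth}(X)=1$ --- the opposite of what you must prove. The paper's argument runs the other way: it \emph{rules out} this projection structure. Assuming $X$ is not linearly normal, the surface $\tilde X$ is a del Pezzo surface and $X$ is $3$-regular, so $H^1(\mathcal{I}_X(2))=0$; combined with $a_2(X)=a_2(\mathcal{C})$ from Lemma \ref{lem:hyperplane}, the restriction sequence forces $H^1(\mathcal{I}_X(1))=0$, a contradiction. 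Once $X$ is known to be linearly normal, one bootstraps $H^1(\mathcal{I}_X(j))=0$ for all $j\ge 1$ from the $3$-regularity of $\mathcal{C}$ via the sequence (\ref{eq:higher normality}), which gives ${\rm depth}(X)\ge 2$, and ${\rm depth}(X)=3$ is excluded by Theorem \ref{thm:known results 2 Park2015}. None of this appears in your sketch, and it is the real content of the proposition.

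A smaller point: your base case is misjustified. It is not true that every nondegenerate irreducible projective curve has depth $1$; an arithmetically Cohen--Macaulay curve has depth $2$. For the curves at hand one gets ${\rm depth}=1$ either from Proposition \ref{prop:degree bound}.(2), since ${\rm depth}=2$ with $d=c+3$ would force $a_2=\binom{c+1}{2}-2$, or by inspecting the two classes in Proposition \ref{prop:curve case}, neither of which is linearly normal and arithmetically normal. This is easily repaired, unlike the gap above.
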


\begin{proof}
If $n=1$, then the assertion comes from Proposition \ref{prop:curve
case}. Suppose that the statement of this lemma holds for $n=2$.
Then it holds for all $n \geq 2$ by Lemma \ref{lem:hyperplane}. So
we concentrate on proving the case of $n=2$.

From now on, suppose that $n=2$. Let $\mathcal{C} \subset \P^{c+1}$
be a general hyperplane section of $X$. Lemma \ref{lem:hyperplane}
shows that $a_2 (\mathcal{C}) = {{c+1} \choose {2}}-3$. According to
Proposition \ref{prop:curve case}, there are two cases.

First, suppose that $\mathcal{C}$ is a smooth rational curve of
degree $d=c+3$ having a $4$-secant line. Thus $\mathcal{C}$ is a
curve of maximal regularity and hence $X$ is a \textit{surface of
maximal sectional regularity} due to \cite{BLPS1}. Note that
\begin{equation*}
d=c+3 \leq 2(2+c)-4
\end{equation*}
since $c \geq 4$. Thus we get ${\rm depth}(X) = 2$ by \cite[Theorem
4.1]{BLPS2}.

Next, assume that $\mathcal{C}$ is the image of an isomorphic
projection of a linearly normal curve $\tilde{\mathcal{C}} \subset
\P^{2+c}$ of arithmetic genus $1$ from a point. Then
$\tilde{\mathcal{C}}$ satisfies Green-Lazarsfeld's condition $N_{c-1}$ and hence
$\mathcal{C}$ is $3$-regular (cf. \cite[Theorem 3.2]{BLPS2}). We
first show that $X$ is linearly normal. Indeed, if not
then it is the image of an isomorphic projection of a surface
$\tilde{X} \subset \P^{c+3}$ from a point. Note that ${\rm
deg}(\tilde{X})=c+3$ and a general hyperplane section of $\tilde{X}$
is of arithmetic genus $1$. Namely, $\tilde{X}$ is a del Pezzo
surface (cf. \cite[Chapter I. $\S~6$]{Fu3}). Then $X$ is $3$-regular (cf.
\cite[Corollary 3.3]{BLPS2}). Now, consider the long exact sequence
\begin{equation*}
0 \rightarrow H^0 (\P^{c+2} , \mathcal{I}_X (2)) \rightarrow H^0
(\P^{c+1} , \mathcal{I}_{\mathcal{C}} (2)) \rightarrow H^1 (\P^{c+2}
, \mathcal{I}_X (1)) \rightarrow H^1 (\P^{c+2} , \mathcal{I}_X (2))
=0
\end{equation*}
Since $a_2 (X) = a_2 (\mathcal{C})$ by Lemma
\ref{lem:hyperplane}, it follows that $H^1 (\P^{c+2} , \mathcal{I}_X (1)) =0$, 
which contradicts to our assumption that $X$ is not linearly normal.
In consequence, it is shown that $X$ is linearly normal.

To show that ${\rm depth}(X) \geq 2$, we use the long exact sequence
\begin{equation}\label{eq:higher normality}
\cdots \rightarrow H^1 (\P^{c+2} , \mathcal{I}_X (j)) \rightarrow
H^1 (\P^{c+2} , \mathcal{I}_X (j+1)) \rightarrow H^1 (\P^{c+1} ,
\mathcal{I}_{\mathcal{C}} (j+1)) \rightarrow \cdots.
\end{equation}
Note that the third term vanishes for all $j \geq 1$ since $\mathcal{C}$ is
$3$-regular. For $j=1$, we get
\begin{equation*}
H^1 (\P^{c+2} , \mathcal{I}_X (2))=0
\end{equation*}
since $X$ is linearly normal. Repeating this argument, we can see
the vanishing
\begin{equation*}
H^1 (\P^{c+2} , \mathcal{I}_X (j))=0
\end{equation*}
for all $j \geq 1$. This completes the proof that ${\rm depth}(X)
\geq 2$.

Finally, Theorem \ref{thm:known results 2 Park2015} shows that if
${\rm depth}(X) = 3$ then $a_2 (X) = {{c+1} \choose {2}}-2$.
Therefore ${\rm depth}(X)$ can only be $2$.
\end{proof}

Now, we are ready to give a\\

\noindent {\bf Proof of Theorem \ref{first main theorem}.}
$(\Rightarrow)$ : By Corollary \ref{cor:degree range}, it holds that
$c+2 \leq d \leq d+4$. Also if $d=c+2$, then
\begin{equation*}
a_2 (X) = {{c+1} \choose {2}} -n-2 +
\mbox{depth}(X)
\end{equation*}
(cf. Theorem \ref{thm:VAMD review}.(1)). Thus ${\rm
depth}(X)=n-1$ and $n \geq 2$.

Suppose that $d=c+3$. Then Lemma \ref{lem:hyperplane} guarantees
that the general linear curve section $\mathcal{C} \subset \P^{1+c}$ of $X$ is
either $(i)$ a smooth rational curve of $d=c+3$ having a $4$-secant
line or else $(ii)$ the image of an isomorphic projection of a
linearly normal curve of arithmetic genus one from a point. In the
former case, $\mathcal{C}$ is a curve of maximal curve and hence $X$ is a
variety of maximal sectional regularity. In the latter case,
$g(X)=1$ and ${\rm depth} (X) =n$ by Proposition \ref{prop:depth for d=c+3}.

Finally, assume that $d=c+4$. Then Proposition \ref{prop:degree
bound}.(2) says that $\mbox{depth}(X)=n+1$.
\smallskip

\noindent $(\Leftarrow)$ : For $(i)$, see again Theorem \ref{thm:VAMD review}.(1). For $(ii)$, we get ${\rm depth}(X)=n$ by Proposition \ref{prop:VMSR d=c+3}.(1). Therefore we have ${\rm depth}(X) \geq n$ for all cases $(i) \sim (iv)$. Thus the assertion comes immediately from Proposition \ref{prop:curve
case}.   \qed \\

\section{The irreducible decomposition of $\textsf{Q}(X)$ - Part I}
\noindent For an $n$-dimensional nondegenerate irreducible projective variety $X \subset \P^{n+c}$,
we denote by $\textsf{Q}(X)$ the base locus of $|\mathcal{I}_X (2)|$. The aim of this section is to prove
Theorem \ref{thm:irreducible decomposition 1} by investigating the irreducible decomposition of $\textsf{Q}(X)$
when $X$ satisfies the condition $a_2 (X) = {{c+1} \choose {2}}-k$ for some $k \in \{0,1,2,3\}$.

We begin with the following Remark \ref{rmk:trivial cases}, in which
we briefly review what is already known about Theorem \ref{thm:irreducible decomposition 1}.

\begin{remark}\label{rmk:trivial cases}
Due to Theorem \ref{thm:known results 1 classical}, Theorem \ref{thm:known results 2 Park2015}
and Theorem \ref{first main theorem}, we have a complete list of $X \subset \P^{n+c}$ in Theorem \ref{thm:irreducible decomposition 1}.
Let us divide them into four classes as follows:
\begin{enumerate}
\item[$(i)$] $d=c+1+k$ and $\mbox{depth}(X)=n+1$;
\item[$(ii)$] $d=c+2$ and $\mbox{depth}(X)=n$ or $n-1$;
\item[$(iii)$] $d=c+3$, $g(X)=1$ and $\mbox{depth}(X)=n$;
\item[$(iv)$] $d=c+3$ and $X$ is a variety of maximal sectional
regularity.
\end{enumerate}
\smallskip

\noindent $(1)$ When $X$ belongs to case $(i)$, it is well known
that $I(X)$ is generated by quadrics and hence $\textsf{Q}(X)=X$.

\noindent $(2)$ When $X$ belongs to case $(iv)$, there exists an
$n$-dimensional linear space $\F (X)$ such that $X \cap \F (X)$ is a
quartic hypersurface in $\F(X)$. In this case, it is known that
\begin{equation*}
I(X)_2 = I(X \cup \F(X))_2
\end{equation*}
and $X \cup \F (X)$ is ideal theoretically cut out by quadrics. In
consequence, we have
\begin{equation*}
\textsf{Q}(X) = X \cup \F (X).
\end{equation*}
For details, see Proposition \ref{prop:VMSR d=c+3}.

\noindent $(3)$ To study the cases $(ii)$ and $(iii)$, it turns out
to be crucial to investigate $\textsf{Q}(Z)$ when a nondegenerate
projective algebraic set $Z \subset \P^r$ is a divisor of a smooth
rational normal scroll in $\P^r$.
\end{remark}

\begin{notation and remark}\label{nr:smooth rational normal scroll}
Let $0 \leq a_1 \leq \ldots \leq a_{n+1}$ be a sequence of integers where $a_n$ and $a_{n+1}$ are positive. Consider
the $(n+1)$-fold rational normal scroll
\begin{equation*}
Y:= S(a_1 , \ldots , a_{n+1} ) \subset \P^{n+c}
\end{equation*}
of numerical type $(a_1 , \ldots , a_n , a_{n+1})$. Thus
$c=a_1 + \cdots + a_{n+1}$. The divisor class group of $Y$ is freely
generated by the hyperplane section $H$ and a linear subspace $F
\subset Y$ of dimension $n$. So, we can write the divisor class of
an effective divisor $X$ of $Y$ as $aH+bF$ for some integers $a\geq
0$ and $b \in \Z$. Several basic properties of $X$ are invariant
insider its divisor class. For example, $X$ is a nondegenerate
subscheme of $\P^{n+c}$ in the sense that its homogeneous ideal
contains no nonzero linear forms if and only if
\begin{equation}\label{eq:nondegenerate}
a=0 \quad {\mbox and} \quad b \geq 1+a_{n+1} \quad {\mbox or} \quad
a=1 \quad {\mbox and} \quad b \geq 1 \quad {\mbox or} \quad a \geq 2
\quad {\mbox and} \quad b \geq -a a_{n+1}
\end{equation}
(cf. \cite[Lemma 2.2]{Pa3}).
\end{notation and remark}

\begin{proposition}\label{prop:quadratic scheme divisor of SRNS case}
Let $Y$ and $X$ be as in Notation and Remark \ref{nr:smooth rational
normal scroll}. We assume that the divisor class $aH+bF$ of $X$
satisfies (\ref{eq:nondegenerate}) and hence $X \subset \P^{n+c}$ is
nondegenerate. Then
\smallskip

\renewcommand{\descriptionlabel}[1]%
             {\hspace{\labelsep}\textrm{#1}}
\begin{description}
\setlength{\labelwidth}{13mm} \setlength{\labelsep}{1.5mm}
\setlength{\itemindent}{0mm}

\item[$(1)$] The following three statements are equivalent:
\begin{enumerate}
\item[$(i)$] $\textsf{Q} (X) = Y$;
\item[$(ii)$] $a_2 (X) = a_2 (Y)$;
\item[$(iii)$] Either $a=0$ and $b \geq 1+2a_{n+1}$ or $a=1$ and $b
\geq 1+a_{n+1}$ or $a =2$ and $b \geq 1$ or $a \geq 3$.
\end{enumerate}
\smallskip

\item[$(2)$] Let $k \geq 1$ be the integer such that $a_k < a_{k+1} = \cdots =
a_{n+1}$ and let $X_0$ denote the subvariety $S(a_1 , \ldots , a_k
)$ of $Y$. Then
\smallskip

\renewcommand{\descriptionlabel}[1]%
             {\hspace{\labelsep}\textrm{#1}}
\begin{description}
\setlength{\labelwidth}{13mm} \setlength{\labelsep}{1.5mm}
\setlength{\itemindent}{0mm}
\item[$(2.a)$] If $a=2$ and $-2a_{n+1} \leq b \leq 0$, then $\textsf{Q}
(X) = X$.
\item[$(2.b)$] If $a=1$ and $1 \leq b \leq a_1$, then
$\textsf{Q} (X) = X$.
\item[$(2.c)$] If $a=1$ and $a_1 + 1 \leq b \leq a_{n+1}$,
then $\textsf{Q} (X) \subseteq X \cup X_0$.
\end{description}
\end{description}
\end{proposition}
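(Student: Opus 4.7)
The plan is to relate quadrics on $X$ to sections of a twist of $\mathcal{O}_Y$ via the embedded-divisor ideal sequence on $Y$, and to read off $(1)$ and $(2)$ from a cohomology computation and a base-locus computation on the scroll. Since $X$ is a divisor on $Y$ of class $aH+bF$, the ideal sheaf sequence on $\P^{n+c}$ twisted by $\mathcal{O}(2)$ becomes
\begin{equation*}
0 \to \mathcal{I}_Y(2) \to \mathcal{I}_X(2) \to \mathcal{O}_Y((2-a)H - bF) \to 0,
\end{equation*}
using $\mathcal{I}_{X/Y} = \mathcal{O}_Y(-aH-bF)$. Because $Y$ is a variety of minimal degree, it is projectively normal and scheme-theoretically cut out by quadrics, so $H^1(\P^{n+c}, \mathcal{I}_Y(2)) = 0$ and $Y = V(I(Y)_2)$. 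Setting $V := H^0(Y, \mathcal{O}_Y((2-a)H - bF))$, the induced sequence on $H^0$ gives $a_2(X) - a_2(Y) = \dim V$. Moreover, restricting a quadric $q \in I(X)_2$ to $Y$ and writing $q|_Y = s \cdot \tilde q$, where $s \in H^0(Y, \mathcal{O}_Y(aH+bF))$ is the defining section of $X$ and $\tilde q \in V$, produces the set-theoretic decomposition
\begin{equation*}
\textsf{Q}(X) = X \cup \mbox{Bs}\bigl|(2-a)H - bF\bigr|_Y
\end{equation*}
(with the convention that the base locus of the empty linear system is $Y$).

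For part $(1)$, the equivalence (i)$\Leftrightarrow$(ii) now follows formally: if $V = 0$ then $I(X)_2 = I(Y)_2$ and $\textsf{Q}(X) = Y$; conversely, if $V \neq 0$ both $X$ and $\mbox{Bs}|(2-a)H - bF|_Y$ are proper closed subsets of the irreducible variety $Y$, so $\textsf{Q}(X) \neq Y$. For (ii)$\Leftrightarrow$(iii), I compute $\dim V$ on $Y = \P(\mathcal{E})$ with $\mathcal{E} = \bigoplus_{i=1}^{n+1}\mathcal{O}_{\P^1}(a_i)$, via $\pi_* \mathcal{O}_Y(\alpha H+\beta F) = \mbox{Sym}^\alpha \mathcal{E} \otimes \mathcal{O}_{\P^1}(\beta)$ for $\alpha \geq 0$ (and $\pi_* = 0$ for $\alpha < 0$). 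Splitting on the value of $a$, one finds $V = 0$ exactly when either $a \geq 3$, or $a = 2$ and $b \geq 1$, or $a = 1$ and $b \geq a_{n+1}+1$, or $a = 0$ and $b \geq 2a_{n+1}+1$, matching (iii).

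For part $(2)$ it remains to compute $\mbox{Bs}|(2-a)H - bF|_Y$ in each subcase. In $(2.a)$ the class is $-bF$ with $-b \geq 0$, and the linear system is the $\pi$-pullback of a base-point-free system on $\P^1$, so $\textsf{Q}(X) = X$. In $(2.b)$ the hypothesis $b \leq a_1$ forces every summand of $\mathcal{E}(-b) = \bigoplus \mathcal{O}(a_i - b)$ to have non-negative degree; then $\mathcal{E}(-b)$ is globally generated, its tautological line bundle $\mathcal{O}_Y(H - bF) = \mathcal{O}_{\P(\mathcal{E}(-b))}(1)$ is base-point-free, and again $\textsf{Q}(X) = X$. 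In $(2.c)$ the evaluation map $H^0(\mathcal{E}(-b)) \to \mathcal{E}(-b)_t$ at $t \in \P^1$ has image $\bigoplus_{a_i \geq b}\mathcal{O}(a_i - b)_t$, so the base locus in the fiber $Y_t \cong \P^n$ is the projectivization of the complementary direct summand $\bigoplus_{a_i < b}\mathcal{O}(a_i)_t$; setting $j_0 := \#\{i : a_i < b\}$, these fibers sweep out the subscroll $S(a_1, \ldots, a_{j_0})$. The hypothesis $b \leq a_{n+1} = a_{k+1}$ forces $j_0 \leq k$, so the subscroll sits inside $X_0 = S(a_1, \ldots, a_k)$, proving $\textsf{Q}(X) \subseteq X \cup X_0$.

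The main obstacle is establishing the decomposition $\textsf{Q}(X) = X \cup \mbox{Bs}|(2-a)H - bF|_Y$ in a form that tracks the splitting $q|_Y = s \cdot \tilde q$ pointwise on $Y$; once this is in place, parts $(1)$ and $(2)$ reduce to standard cohomology and evaluation-map computations on the rational normal scroll $\P(\mathcal{E})$.
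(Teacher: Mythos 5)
Your argument is correct, and for part $(2)$ it follows a genuinely different route from the paper. Both proofs start from the same exact sequence $0 \to \mathcal{I}_Y(2) \to \mathcal{I}_X(2) \to \mathcal{O}_Y((2-a)H-bF) \to 0$ and the resulting formula $a_2(X)-a_2(Y)=h^0(Y,\mathcal{O}_Y((2-a)H-bF))$, and part $(1)$ is handled the same way (the paper leaves the $h^0$ computation as ``an elementary task''; you carry it out via $\pi_*$, correctly). The divergence is in part $(2)$: the paper proves $(2.a)$ by showing $I(X)=\langle I(X)_2\rangle$ (using $\reg(X)=2$ for very negative $b$, and the ACM property plus the Green--Lazarsfeld bound on $\leq 2c$ general points otherwise), quotes \cite{Pa1} for $(2.b)$, and gets $(2.c)$ by the trick $\textsf{Q}(X)\subseteq\bigcap_i \textsf{Q}(X\cup X_i)$ with $X\cup X_i\equiv 2H+(b-a_{n+1})F$ falling under $(2.a)$. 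You instead establish the single set-theoretic identity $\textsf{Q}(X)=X\cup \mbox{Bs}\,|(2-a)H-bF|_Y$ and compute the base locus fiberwise on $\P(\mathcal{E})$; this is legitimate because the restriction map $H^0(\mathcal{I}_X(2))\to H^0(\mathcal{O}_Y(2H-X))$ is surjective ($H^1(\mathcal{I}_Y(2))=0$) and $\bigcap_q\bigl(X\cup Z(\tilde q)\bigr)=X\cup\bigcap_q Z(\tilde q)$ as sets. Your route is more uniform and self-contained (no appeal to \cite{Pa1}), and in $(2.c)$ it actually identifies $\textsf{Q}(X)$ exactly as $X\cup S(a_1,\ldots,a_{j_0})$, which is sharper than the stated containment. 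What it gives up is the ideal-theoretic information the paper extracts in $(2.a)$ and $(2.b)$ (generation of $I(X)$ by quadrics), which is stronger than the set-theoretic equality $\textsf{Q}(X)=X$; since the paper defines $\textsf{Q}(X)$ as the zero set of $I(X)_2$, the set-theoretic version suffices here. One small point to keep in mind: the notation of the paper allows $a_1=0$, in which case $Y$ is a cone and your computation should be read on the desingularization $\P(\mathcal{E})$ and pushed down to $Y$ -- a wrinkle already present in the paper's own setup.
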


\begin{proof}
From the short exact sequence $0 \rightarrow \mathcal{I}_Y
\rightarrow \mathcal{I}_X  \rightarrow \mathcal{O}_Y (-X)
\rightarrow 0$, we get
\begin{equation*}
0 \rightarrow H^0 (\P^{n+c} , \mathcal{I}_Y (2)) \rightarrow H^0
(\P^{n+c} , \mathcal{I}_X (2)) \rightarrow H^0 (Y , \mathcal{O}_Y
(2H-X)) \rightarrow 0
\end{equation*}
since $Y \subset \P^{n+c}$ is projectively normal. This proves the
equality
\begin{equation}\label{eq:difference}
a_2 (X) - a_2 (Y) = h^0 \left( Y,\mathcal{O}_Y \left( (2-a)H-bF
\right) \right).
\end{equation}

\noindent $(1)$ $(i) \Leftrightarrow (ii)$ : Since $I(Y)_2 \subseteq
I(X)_2$ and $Y=\textsf{Q} (Y)$, we have
\begin{equation*}
\textsf{Q} (X) \subseteq Y.
\end{equation*}
Obviously, if $a_2 (X) > a_2 (Y)$ then $\textsf{Q} (X)
\subsetneq Y$. Thus it holds that $\textsf{Q} (X) = Y$ exactly when $a_2 (X)
= a_2 (Y)$.
\smallskip

\noindent $(ii) \Leftrightarrow (iii)$ : From (\ref{eq:difference}),
the equality $a_2 (X) = a_2 (Y)$ holds if and only if
$$h^0 \left( Y,\mathcal{O}_Y \left( (2-a)H-bF \right) \right) =0.$$
It is an elementary task to check that this occurs exactly when
$(iii)$ holds.
\smallskip

\noindent $(2)$ We prove the three cases in turn.

\noindent $(2.a)$ Suppose that $a=2$ and $-2a_{n+1} \leq b \leq 0$.
We will see that $I(X)$ is generated by $I(X)_2$ and hence
$\textsf{Q} (X) = X$. Indeed, if $-2a_{n+1} \leq b \leq -c+1$ then
$\mbox{reg}(X)=2$ and hence $I(X) = \langle I(X)_2 \rangle$. Also,
if $-c+2 \leq b \leq 0$ then $X$ is arithmetically Cohen-Macaulay
and $\mbox{deg}(X) \leq 2c$. Let $\Gamma \subset \P^c$ be a general
zero-dimensional linear section of $X$. Since $|\Gamma| \leq 2c$,
the homogeneous ideal of $\Gamma$ is generated by its quadratic
equations. This shows that $I(X) = \langle I(X)_2 \rangle$.
\smallskip

\noindent $(2.b)$ Suppose that $a=1$ and $1 \leq b \leq a_1$. Then
$X$ is ideal-theoretically cut out by quadrics and hence $\textsf{Q}
(X) = X$. For details, see \cite[Lemma 3.1 and Theorem
3.2.(2)]{Pa1}.
\smallskip

\noindent $(2.c)$ Suppose that $a=1$ and $a_1 + 1 \leq b \leq
a_{n+1}$. For each $k+1 \leq i \leq n+1$, let $X_i$ denote the
sub-scroll $S(a_1 , \ldots , \hat{a_i} , \ldots , a_{n+1})$ of $Y$.
Then
\begin{equation*}
X_i \equiv H-a_i F = H-a_{n+1} F
\end{equation*}
since $a_{k+1} = \cdots = a_{n+1}$, and hence we have
\begin{equation*}
X \cup X_i \equiv 2H+(b-a_{n+1})F.
\end{equation*}
Then
\begin{equation*}
-2a_{n+1} \leq 1-a_{n+1} \leq b-a_{n+1} \leq 0
\end{equation*}
and hence $\textsf{Q} (X \cup X_i) = X \cup X_i$ by (2.a). Therefore
\begin{equation*}
\textsf{Q} (X) \subset \bigcap_{k+1 \leq i \leq n+1} \textsf{Q} (X
\cup X_i) = X \cup \left( \bigcap_{k+1 \leq i \leq n+1} X_i \right)
= X \cup X_0 .
\end{equation*}
This completes the proof.
\end{proof}

\begin{proposition}\label{prop:quadratic scheme d=c+3 curve}
Let $\mathcal{C} \subset \P^{1+c}$, $c \geq 4$, be a nondegenerate projective
integral curve of arithmetic genus $1$ and degree $d=c+3$. Then
$\mathcal{C}$ is the only nondegenerate irreducible component of
$\textsf{Q} (\mathcal{C})$.
\end{proposition}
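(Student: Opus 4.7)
The plan unfolds in two parts. First I compute $a_2(\mathcal{C}) = \binom{c+1}{2}-3$; then I exclude any nondegenerate irreducible component $W \ne \mathcal{C}$ of $\textsf{Q}(\mathcal{C})$ by playing the lower bound $a_2(W) \ge a_2(\mathcal{C})$ (which follows from $I(\mathcal{C})_2 \subseteq I(W)_2$) against upper bounds coming from \eqref{eq:fundamental inequality} and a Hilbert-function estimate on a general hyperplane section of the reducible curve $\mathcal{C} \cup W$.

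For the first part, since $\deg\mathcal{O}_\mathcal{C}(1) = c+3 > 0 = 2g-2$, Riemann-Roch on the integral genus-1 curve $\mathcal{C}$ gives $h^0(\mathcal{O}_\mathcal{C}(1)) = c+3 > c+2 = h^0(\mathcal{O}_{\P^{c+1}}(1))$, so $h^1(\mathcal{I}_\mathcal{C}(1)) = 1$ and $\mathcal{C}$ is not arithmetically Cohen-Macaulay. The restriction-to-$H$ long exact sequence, together with the 2-normality of $c+3$ general points in $\P^c$ (Castelnuovo), yields a surjection $H^1(\mathcal{I}_\mathcal{C}(1)) \twoheadrightarrow H^1(\mathcal{I}_\mathcal{C}(2))$, so $h^1(\mathcal{I}_\mathcal{C}(2)) \in \{0,1\}$; the case $h^1(\mathcal{I}_\mathcal{C}(2))=1$ would give $a_2(\mathcal{C}) = \binom{c+1}{2}-2$ and force $\mathcal{C}$ to be aCM by Theorem \ref{thm:known results 2 Park2015}, a contradiction. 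Hence $a_2(\mathcal{C}) = \binom{c+1}{2}-3$.

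Now suppose $W \subseteq \textsf{Q}(\mathcal{C})$ is a nondegenerate irreducible component with $W \ne \mathcal{C}$, so $a_2(W) \ge \binom{c+1}{2}-3$. If $\dim W \ge 2$, then \eqref{eq:fundamental inequality} applied to the irreducible nondegenerate $W \subset \P^{c+1}$ forces $a_2(W) \le \binom{c}{2} < \binom{c+1}{2}-3$ (as $c \ge 4$), a contradiction; this also shows $\mathcal{C}$ cannot be properly contained in any larger component, so $\mathcal{C}$ itself is an irreducible component. If instead $\dim W = 1$, let $X := \mathcal{C} \cup W$; then $X \subseteq \textsf{Q}(\mathcal{C})$ forces $I(X)_2 = I(\mathcal{C})_2$ and $a_2(X) = \binom{c+1}{2}-3$. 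For general $H$, put $\Gamma := X \cap H = (\mathcal{C}\cap H) \sqcup (W \cap H)$; nondegeneracy of $X$ gives $a_2(X) \le a_2(\Gamma)$, so $h_\Gamma(2) = \binom{c+2}{2}-a_2(\Gamma) \le c+4$. On the other hand, by Harris's uniform position theorem (applied separately to the irreducible nondegenerate curves $\mathcal{C}$ and $W$) together with a standard incidence/Bertini argument, for generic $H$ one can select $p_1, p_2 \in W \cap H$ so that $\Gamma_0 := (\mathcal{C}\cap H)\cup\{p_1,p_2\}$ is a set of $c+5$ points in general linear position in $H \cong \P^c$; since $c+5 \le 2c+1$, Castelnuovo's lemma makes $\Gamma_0$ $2$-normal, so $h_{\Gamma_0}(2) = c+5$. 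As $\Gamma_0 \subseteq \Gamma$, we get $h_\Gamma(2) \ge h_{\Gamma_0}(2) = c+5 > c+4$, a contradiction.

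The main obstacle is the general-linear-position claim for $\Gamma_0$. It is routine but must be verified: one checks via a codimension count on the incidence variety of pairs $(H, \{p_1, p_2\})$ --- using that $\mathcal{C}$ and $W$ are distinct irreducible nondegenerate curves in $\P^{c+1}$ --- that for generic $H$ and generic $p_1, p_2 \in W \cap H$ no $c+1$ of the $c+5$ points of $\Gamma_0$ lie on a hyperplane of $H$.
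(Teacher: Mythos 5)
Your reduction to the case $\dim W = 1$ is correct and coincides with the paper's first step (any nondegenerate irreducible component $W$ of $\textsf{Q}(\mathcal{C})$ has $a_2(W)\ge a_2(\mathcal{C})=\binom{c+1}{2}-3>\binom{c}{2}$, which by \eqref{eq:fundamental inequality} forces $\dim W=1$), and your computation of $a_2(\mathcal{C})$ is fine. The gap is exactly where you flag it, and it is not a routine codimension count. You need, for generic $H$, two points $p_1,p_2\in W\cap H$ such that $(\mathcal{C}\cap H)\cup\{p_1,p_2\}$ is in linearly general position. The general position theorem applies to each irreducible curve separately and says nothing about mixed subsets; the bad events are governed by the position of $W$ relative to the higher secant strata of $\mathcal{C}$. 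For instance, already the requirement that some $p\in W\cap H$ avoid the $\binom{c+3}{3}$ hyperplanes spanned by the $c$-element subsets of $\mathcal{C}\cap H$ fails precisely when every point of $W\cap H$ lies on a $c$-secant $(c-1)$-plane of $\mathcal{C}$ contained in $H$, and ruling this out requires controlling the dimension of the family of $c$-secant planes through points of $W$ --- a dimension that jumps when $W$ sits inside a special secant stratum. Since the curves $W$ you must exclude are by hypothesis highly non-generic (they lie in the base locus of $|\mathcal{I}_{\mathcal{C}}(2)|$), a genericity count over an unspecified incidence variety does not discharge this; and the two-point conditions (spans of $p_1,p_2$ together with $c-1$ points of $\mathcal{C}\cap H$) are further unaddressed. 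Until that lemma is proved, the inequality $h_\Gamma(2)\ge c+5$ and hence the contradiction are not established.

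For comparison, the paper avoids hyperplane sections of the reducible curve $\mathcal{C}\cup W$ altogether: the linearly normal model $\tilde{\mathcal{C}}\subset\P^{c+2}$ is a bisecant divisor of a smooth rational normal surface scroll $\tilde S$, so $\mathcal{C}$ lies on the projected surface $S=\pi_P(\tilde S)$ of almost minimal degree; Proposition \ref{prop:quadratic scheme divisor of SRNS case} forces any nondegenerate component $W$ of $\textsf{Q}(\mathcal{C})$ into $S$, and the divisor class of $\tilde{\mathcal{C}}\cup\tilde W$ on $\tilde S$ is then $aH+bF$ with $a\ge 3$, giving $I(\mathcal{C}\cup W)_2=I(S)_2$ and hence $a_2(\mathcal{C})=a_2(S)\le\binom{c}{2}$, a contradiction. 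Any honest proof of your general position claim would likely have to invoke this same scroll geometry, at which point the paper's route is shorter; I recommend replacing the last step of your argument accordingly.
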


\begin{proof}
Since $h^0 (\mathcal{C} , \mathcal{O}_{\mathcal{C}} (1)) = d+ 1-g
(\mathcal{C}) = c+3$, it holds that
\begin{equation*}
\mathcal{C} = \pi_P (\tilde{\mathcal{C}})
\end{equation*}
where $\tilde{\mathcal{C}} \subset \P^{2+c}$ is the linearly normal embedding
of $\mathcal{C}$ by the complete linear series
$|\mathcal{O}_{\mathcal{C}} (1)|$ and $\pi_P : \tilde{\mathcal{C}}
\rightarrow \P^{1+c}$ is an isomorphic linear projection of
$\tilde{\mathcal{C}}$ from a closed point $P \in \P^{2+c}$.

Note that $\tilde{\mathcal{C}}$ is contained in a smooth rational
normal surface scroll $\tilde{S} = S(m+b-2,m)$ where $0 \leq b \leq
2$ and $b+2m=c+3$ (cf. \cite[Theorem 3.1 and Corollary 3.3]{Pa2}).
Indeed, $\tilde{S}$ is smooth since
\begin{equation*}
c \geq 4 \quad \mbox{and} \quad m \geq m+b-2 \geq  \left \lceil \frac{c-1+b}{2} \right \rceil
\geq 2.
\end{equation*}
Furthermore, $P$ is not contained in $\tilde{S}$ since
$\tilde{\mathcal{C}}$ is a bisecant divisor of $\tilde{S}$ and
$\pi_P : \tilde{\mathcal{C}} \rightarrow \mathcal{C}$ is an isomorphism. This implies
that the projected surface
\begin{equation*}
S:=\pi_P (\tilde{S}) \subset \P^{1+c}
\end{equation*}
is contained in a rational normal $3$-fold scroll $T \subset
\P^{1+c}$ (cf. \cite[Theorem 1.4]{BS2}). Also $S$ is a surface of almost minimal degree.
By Theorem \ref{thm:VAMD review}.(3), either $\mbox{dim}~\mbox{Vert}(T) \leq 0$ or else
$\mbox{dim}~\mbox{Vert}(T) = 1$ and $S$ is arithmetically Cohen-Macaulay.
In the latter case, $I(S)$ is generated by quadratic forms and hence $\textsf{Q}(S) = S$.
In the former case, $S$ is linearly equivalent to $H+2F$ as a divisor of $T$ and hence Proposition \ref{prop:quadratic scheme divisor of SRNS case}.(2.c) shows that $S$ is the
unique nondegenerate irreducible component of $\textsf{Q}(S)$.

Suppose that there is a nondegenerate irreducible component $D$ of
$\textsf{Q}(\mathcal{C})$ which is different from $\mathcal{C}$. Then $D$ must be a curve since $a_2 (D) \geq a_2 (\mathcal{C}) > {{c} \choose {2}}$. Also, since
\begin{equation*}
D \subset \textsf{Q}(\mathcal{C}) \subset \textsf{Q}(S)
\end{equation*}
and $S$ is the only nondegenerate irreducible component of $\textsf{Q}(S)$, it follows that $D$ is contained in $S$. Let $\tilde{D} \subset \tilde{S}$ be the pull-back of $D$ via $\pi_P$.

\begin{equation*}
\begin{CD}
\tilde{\mathcal{C}} \quad & \quad \subset \quad & \quad
\tilde{\mathcal{C}} \cup \tilde{D} \quad & \quad \subset \quad &
\tilde{S}
& & \\
\downarrow \quad &   & \downarrow &   & \quad \quad \downarrow \pi_P
& & \\
\mathcal{C} \quad & \quad \subset & \mathcal{C} \cup D & \subset &
\quad S \quad & \quad \subset \quad & \quad T \\
\end{CD}
\end{equation*}

\noindent Observe that
\begin{equation*}
\tilde{\mathcal{C}} \cup \tilde{D} \equiv aH+bF \quad \mbox{for
some} \quad a \geq 3.
\end{equation*}
In particular, it holds that
\begin{equation*}
I(\tilde{\mathcal{C}} \cup \tilde{D})_2 = I(\tilde{S})_2 \quad \mbox{and hence} \quad \textsf{Q}(\tilde{\mathcal{C}} \cup \tilde{D}) = \tilde{S}
\end{equation*}
by Proposition \ref{prop:quadratic scheme divisor of SRNS case}.
Then it follows that
\begin{equation*}
I(\mathcal{C})_2 = I(\mathcal{C} \cup D)_2 = I(S)_2 .
\end{equation*}
In particular, we get
\begin{equation*}
a_2 (S) = a_2 (\mathcal{C}) = {{c+1} \choose {2}} -3 = {{c} \choose {2}} + c-3 > {{c} \choose {2}} ,
\end{equation*}
which is a contradiction. In consequence, $\textsf{Q} (\mathcal{C})$ has no nondegenerate irreducible
component except $\mathcal{C}$.
\end{proof}

Now, we are ready to give a\\

\noindent {\bf Proof of Theorem \ref{thm:irreducible decomposition
1}.} In Remark \ref{rmk:trivial cases}, we divide the varieties in
consideration into four cases. When $X$ belongs to the cases $(i)$
and $(iv)$, the statement of our theorem is already shown (cf.
Remark \ref{rmk:trivial cases}). Also if $X$ belongs to the case
$(ii)$, then it is contained in an $(n+1)$-fold rational normal
scroll as a divisor. So, $X$ satisfies the desired property by
Proposition \ref{prop:quadratic scheme divisor of SRNS case}.

Now, suppose that $X$ belongs to the case $(iii)$. Namely, $X
\subset \P^{n+c}$ is a nondegenerate projective irreducible variety
of dimension $n$ and degree $d=c+3$ such that $g(X)=1$. Let
$\mathcal{C} \subset \P^{1+c}$ be a general linear curve section of
$X$. Then $a_2 (X) = a_2 (\mathcal{C})$ by Lemma
\ref{lem:hyperplane} and hence $I(\mathcal{C})_2$ is equal to the
restriction of $I(X)_2$ to $\mathcal{C}$. In particular, it holds
that
\begin{equation*}
\textsf{Q} (\mathcal{C}) = \textsf{Q} (X) \cap \P^{1+c}.
\end{equation*}
By Proposition \ref{prop:quadratic scheme d=c+3 curve}, we know that
$\mathcal{C}$ is the only nondegenerate irreducible component of
$\textsf{Q} (\mathcal{C})$. This shows that $\textsf{Q} (X)$ has no
nondegenerate irreducible component of dimension $\geq n$ except
$X$. \qed \\

\section{The irreducible decomposition of $\textsf{Q}(X)$ - Part II}
\noindent This section is devoted to studying the irreducible
decomposition of $\textsf{Q}(X)$ when $X \subset \P^{n+c}$ is an
$n$-dimensional nondegenerate irreducible projective variety of
degree $d$ such that $d \geq 2c+2k+3$ and $a_2 (X) = {{c} \choose
{2}}-k$ for some $k \in \{0,1,2,3\}$. The following our main result
in this section shows how this problem is related to the
classification theory obtained in Theorem \ref{thm:known results 1
classical}, Theorem \ref{thm:known results 2 Park2015} and Theorem
\ref{first main theorem}.

\begin{theorem}\label{thm:irreducible decomposition 2}
Let $X \subset \P^{n+c}$ be a nondegenerate projective variety of
dimension $n$ and degree $d \geq 2c+2k+3$ such that $a_2 (X) = {{c}
\choose {2}}-k$ for some $k \in \{0,1,2,3\}$. Then there exists a
unique projective variety $Y \subset \P^{n+c}$ satisfying the
following three conditions:
\begin{enumerate}
\item[$(i)$] $\mbox{dim}~Y = n+1$;
\item[$(ii)$] $X \subset Y$ $($and hence $Y$ is nondegenerate in
$\P^{n+c}$$)$;
\item[$(iii)$] $I(Y)_2 = I(X)_2$ $($and hence $\textsf{Q}(Y) =
\textsf{Q}(X)$ and $a_2 (Y) = {{c} \choose {2}}-k$$)$.
\end{enumerate}
\end{theorem}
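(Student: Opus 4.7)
The plan is to prove existence and uniqueness separately. For existence I would take a general zero-dimensional linear section $\Gamma := X \cap \P^c \subset \P^c$, a set of $d \geq 2c+2k+3$ points in general position. The inclusion of ideals yields $a_2(X) \leq a_2(\Gamma)$, so $a_2(\Gamma) \geq {{c} \choose {2}} - k$. The Castelnuovo-theoretic classification of general-position point sets in $\P^c$ with large $a_2$, namely Theorem \ref{thm:Castelnuovo Theory} (which packages the classical Castelnuovo Lemma with Fano's refinement and Petrakiev's recent result from \cite{Pe}), then forces $\Gamma$ to lie on a unique nondegenerate irreducible curve $C_k \subset \P^c$ of prescribed low-degree type: a rational normal curve for $k=0$, an elliptic normal curve for $k=1$, and the corresponding curves for $k=2,3$. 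Moreover the ${{c} \choose {2}}-k$ quadrics through $\Gamma$ coincide with the quadrics through $C_k$. A dimension count makes the restriction map $I(X)_2 \to I(\Gamma)_2$ an isomorphism, so the base locus $\textsf{Q}(X) = \mbox{Bs}(|\mathcal{I}_X (2)|)$ satisfies $\textsf{Q}(X) \cap \P^c = \textsf{Q}(\Gamma) \supseteq C_k$; this forces $\textsf{Q}(X)$ to carry an irreducible $(n+1)$-dimensional component $Y$ through $X$ whose general linear section is $C_k$. The restriction isomorphism then gives $I(Y)_2 = I(X)_2$, yielding all of $(i)$--$(iii)$.

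For uniqueness, suppose $Y_1, Y_2 \subset \P^{n+c}$ both satisfy $(i)$--$(iii)$. Each $Y_i$ is an $(n+1)$-dimensional nondegenerate irreducible variety of codimension $c' := c-1$ in $\P^{n+c}$, and $(iii)$ reads $a_2(Y_i) = {{c} \choose {2}} - k = {{c'+1} \choose {2}} - k$. The hypothesis on $c$ in the theorem statement ($c \geq k+2$ for $0 \leq k \leq 2$ and $c \geq 6$ for $k = 3$) supplies $c' \geq k+1$, which is the standing hypothesis of Theorem \ref{thm:irreducible decomposition 1}. Applied to each $Y_i$, that theorem asserts that $Y_i$ is the only nondegenerate irreducible component of $\textsf{Q}(Y_i)$ of dimension $\geq n+1$. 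But $(iii)$ also gives $\textsf{Q}(Y_1) = \textsf{Q}(X) = \textsf{Q}(Y_2)$, and since $Y_1$ and $Y_2$ are both nondegenerate irreducible $(n+1)$-dimensional components of this common base locus, they must coincide.

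\textbf{Main obstacle.} The bottleneck is the existence step. Part (a), the Castelnuovo-theoretic classification of general-position point sets $\Gamma \subset \P^c$ with $a_2(\Gamma) \geq {{c} \choose {2}} - k$, is classical for $k=0,1$ but genuinely delicate for $k=2,3$, where it relies on Petrakiev's result from \cite{Pe}; this is the source of the sharpened hypothesis $c \geq 6$ at $k=3$. Part (b), promoting the curve $C_k \subset \P^c$ through $\Gamma$ to an $(n+1)$-fold $Y \subset \P^{n+c}$ through $X$, is a base-locus argument critically supported by the quantitative bound $d \geq 2c+2k+3$: this bound is what makes the restriction $I(X)_2 \to I(\Gamma)_2$ an isomorphism, allowing the shape of $C_k$ to propagate globally and single out the $(n+1)$-dimensional component $Y$.
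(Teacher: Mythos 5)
Your overall strategy --- Castelnuovo theory applied to the general zero-dimensional section $\Gamma$ to produce a low-degree curve through it, followed by an application of Theorem \ref{thm:irreducible decomposition 1} to $Y$ viewed in codimension $c-1$ for uniqueness --- is exactly the paper's, and your uniqueness paragraph is essentially the paper's argument. The gap is in the existence step, at the two places where you lean on a ``restriction isomorphism'' $I(X)_2 \cong I(\Gamma)_2$. No dimension count establishes this: one only knows $a_2(X) \le a_2(\Gamma) \le \binom{c}{2}$, and the paper deliberately writes $a_2(\Gamma) = \binom{c}{2}-\epsilon$ with $0 \le \epsilon \le k$, allowing strict inequality. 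You also misattribute the role of the hypothesis $d \ge 2c+2k+3$: its real job is to guarantee $|\Gamma| = d > 2\deg(D)$ for the curve $D$ of degree $\le c+k$ supplied by Theorem \ref{thm:Castelnuovo Theory}, so that no quadric section of the locally Cohen--Macaulay curve $D$ can contain $\Gamma$; this is what yields $I(\Gamma)_2 = I(D)_2$ and hence $D \subseteq \textsf{Q}(\Gamma) \subseteq \textsf{Q}(X) \cap \P^c$ (only this one containment is needed, not an equality).

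More seriously, the passage from ``$D$ sits inside $\textsf{Q}(X)\cap\P^c$'' to ``$\textsf{Q}(X)$ has an irreducible $(n+1)$-dimensional component $Y$ containing $X$ with $I(Y)_2=I(X)_2$'' is the heart of the proof, and you only assert it (``this forces\dots''); the proposed mechanism of the curve's shape ``propagating globally'' via the unproven isomorphism is not an argument. The paper's mechanism is concrete: since $D$ is irreducible it lies in $Y_i \cap \P^c$ for a single component $Y_i$ of $\textsf{Q}(X)$; then $a_2(Y_i) \ge a_2(X) = \binom{c}{2}-k$ forces $\dim Y_i \le n+1$ (Proposition \ref{prop:irreducible decomposition}, which is where $k \le c-2$ enters), while $D \subseteq Y_i \cap \P^c$ forces $\dim Y_i \ge n+1$; next $\Gamma \subset X \cap Y_i$ gives $\dim (X\cap Y_i) \ge n$ and hence $X \subset Y_i$ by irreducibility of $X$; finally $I(Y_i)_2 \subseteq I(X)_2$ follows from $X \subset Y_i$, and $I(X)_2 \subseteq I(Y_i)_2$ follows from $Y_i \subseteq \textsf{Q}(X)$, giving condition $(iii)$ with no restriction isomorphism at all. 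You should replace the propagation heuristic with this component-by-component dimension count.
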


We will give the proof of this theorem at the end of this section.
We begin with some well-known and beautiful results in Castelnuovo
Theory.
 
\begin{theorem}[G. Castelnuovo, G. Fano and Eisenbud-Harris in \cite{H}, I. Petrakiev in \cite{Pe}]\label{thm:Castelnuovo Theory}
Let $k$ be an integer in $\{ 0,1,2,3 \}$. Suppose that 
$$c \geq \begin{cases} k+2 & \mbox{if $0 \leq k \leq 2$, and} \\
                       6   & \mbox{if $k=3$.} \end{cases} $$
Let $\mathcal{C} \subset \P^{1+c}$ be a nondegenerate projective integral curve of degree $d$ and let $\Gamma \subset \P^c$ be a general hyperplane section of $\mathcal{C}$. If
$a_2 (\Gamma) = {{c} \choose {2}}-k$, then $\Gamma$ lies on a nondegenerate projective integral curve $D \subset \P^c$ of degree $\leq c+k$.
\end{theorem}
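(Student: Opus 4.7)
The plan is to reduce the statement to the classical (and recent) Castelnuovo-type theorems classifying finite point sets $\Gamma \subset \P^c$ in uniform position with prescribed $a_2(\Gamma)$. The first step is to observe that a general hyperplane section $\Gamma$ of an integral projective curve $\mathcal{C} \subset \P^{1+c}$ is automatically in uniform position, by Harris's Uniform Position Principle. In particular $\Gamma$ is in linearly general position and every subset of a given cardinality imposes the same number of conditions on quadrics in $\P^c$. This is precisely the hypothesis under which the Castelnuovo--Fano--Eisenbud-Harris--Petrakiev results are stated, so the theorem becomes a direct consequence of those classifications once the hypothesis is translated.

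Having recorded the uniform position of $\Gamma$, I would split the argument into four cases according to $k$ and invoke the corresponding result in each. For $k=0$ the classical Castelnuovo Lemma (as presented in \cite{H}) yields a rational normal curve $D \subset \P^c$ of degree $c$ containing $\Gamma$. For $k=1$, G.~Fano's extension gives an elliptic normal curve $D$ of degree $c+1$. For $k=2$, the Eisenbud-Harris refinement, also treated in \cite{H}, produces an integral curve $D$ of degree $c+2$. For $k=3$, I.~Petrakiev's recent theorem \cite{Pe} gives an integral curve $D$ of degree $c+3$; this is precisely the source of the strengthened codimension assumption $c \geq 6$.

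The conceptual heart of each of these four results is the analysis of the base locus of the linear system of quadrics through $\Gamma$. In broad strokes, one argues that when $a_2(\Gamma)$ is close to the maximal value $\binom{c}{2}$, a dimension count forces the intersection of these quadrics to contain a curve through $\Gamma$; a degree estimate via Bezout combined with the uniform position of $\Gamma$ then forces that curve to be integral of the predicted degree. For $k=0,1,2$ the proofs rely on comparatively standard Castelnuovo-theoretic tools (the General Position Lemma, analysis of linear syzygies among quadratic generators, and trace arguments under hyperplane section), and the integrality of $D$ in each case follows from the uniform position of $\Gamma$ together with the degree bound. The principal obstacle is the case $k=3$: the naive heuristic breaks down in small codimension, and Petrakiev's subtle new arguments are needed, which is why $c \geq 6$ appears as a genuine hypothesis here rather than the naive $c \geq 5$. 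Since in the present paper this statement is being used only as a black box (to produce the auxiliary variety $Y$ in Theorem \ref{second main theorem}), no new argument beyond citing these four theorems is required.
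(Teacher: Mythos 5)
Your proposal matches the paper's treatment: the paper states this result purely as a citation of the classical Castelnuovo--Fano--Eisenbud-Harris theorems in \cite{H} and of Petrakiev's theorem in \cite{Pe}, offering no independent proof, and your reduction via the Uniform Position Principle followed by a case-by-case appeal to those four sources is exactly the intended justification. The added remarks on why $c \geq 6$ is needed when $k=3$ are consistent with \cite{Pe} and do not change the substance.
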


\begin{proposition}\label{prop:irreducible decomposition}
Let $X \subset \P^{n+c}$ be a nondegenerate projective variety of
dimension $n$ and degree $d$ such that $a_2 (X) = {{c} \choose
{2}}-k$ for some $0 \leq k \leq c-2$.
\smallskip

\renewcommand{\descriptionlabel}[1]%
             {\hspace{\labelsep}\textrm{#1}}
\begin{description}
\setlength{\labelwidth}{13mm} \setlength{\labelsep}{1.5mm}
\setlength{\itemindent}{0mm}
\item[$(1)$] Let $Y$ be an irreducible component of $\textsf{Q}(X)$ such that $Y
\subset \P^{n+c}$ is nondegenerate. Then $\mbox{dim}~Y \leq n+1$.

\item[$(2)$] Let $Y$ be an irreducible component of $\textsf{Q}(X)$
which contains $X$. Then either $Y=X$ or else $\mbox{dim}~Y = n+1$.
\end{description}
\end{proposition}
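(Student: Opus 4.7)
The plan is to combine Castelnuovo's bound (Theorem \ref{thm:known results 1 classical}.(1)) with the obvious containment $I(X)_2 \subseteq I(Y)_2$ arising from $Y \subseteq \textsf{Q}(X) = V(I(X)_2)$. This reduces both parts to a short numerical comparison.

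For part (1), I would first note that any quadric in $I(X)_2$ vanishes on $V(I(X)_2) = \textsf{Q}(X)$, hence on $Y$, so
\begin{equation*}
a_2(Y) \;\geq\; a_2(X) \;=\; \binom{c}{2} - k.
\end{equation*}
Since $Y$ is an irreducible component of $\textsf{Q}(X)$, it is a nondegenerate irreducible projective subvariety of $\P^{n+c}$ of codimension $c' := n+c - \dim Y$. Applying Castelnuovo's bound to $Y$ gives
\begin{equation*}
\binom{c'+1}{2} \;\geq\; a_2(Y) \;\geq\; \binom{c}{2} - k.
\end{equation*}
Assume for contradiction that $\dim Y \geq n+2$, so $c' \leq c-2$. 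Then
\begin{equation*}
\binom{c'+1}{2} \;\leq\; \binom{c-1}{2} \;=\; \binom{c}{2} - (c-1),
\end{equation*}
which forces $k \geq c-1$, contradicting the hypothesis $k \leq c-2$. Hence $\dim Y \leq n+1$.

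For part (2), suppose $Y \neq X$. Since $Y$ is irreducible and properly contains the $n$-dimensional irreducible variety $X$, we have $\dim Y > n$, i.e.\ $\dim Y \geq n+1$. Moreover, $Y$ is nondegenerate in $\P^{n+c}$ because it contains the nondegenerate $X$. Applying part (1) then gives $\dim Y \leq n+1$, so $\dim Y = n+1$.

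I do not expect a genuine obstacle: the argument is essentially a one-line application of the Castelnuovo inequality, and the hypothesis $k \leq c-2$ is exactly what is needed to rule out $\dim Y \geq n+2$. The only point requiring a little care is to make sure that $Y$, as an irreducible component of the reduced base scheme $\textsf{Q}(X)$, is indeed a nondegenerate irreducible variety so that Theorem \ref{thm:known results 1 classical}.(1) applies; this is immediate from the definition of $\textsf{Q}(X)$ as the zero set of $I(X)_2$ and from the hypothesis that $Y$ is nondegenerate in (1), respectively from $X \subseteq Y$ in (2).
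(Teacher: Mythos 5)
Your proof is correct and follows essentially the same route as the paper: both parts rest on comparing $a_2(Y)\geq a_2(X)=\binom{c}{2}-k$ with the Castelnuovo bound $a_2(Y)\leq\binom{c'+1}{2}$ for the codimension $c'$ of $Y$, and observing that $\dim Y\geq n+2$ would force $k\geq c-1$; part (2) is then, as in the paper, an immediate consequence of part (1).
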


\begin{proof}
$(1)$ If $m := \mbox{dim}~Y > n+1$, then we have
\begin{equation*}
a_2 (Y ) \leq {{n+c-m +1} \choose {2}} < {{c-1} \choose {2}}.
\end{equation*}
This contradicts to the assumption that
\begin{equation*}
a_2 (Y ) \geq a_2 (X) = {{c-1} \choose {2}}+(c-1-k) \leq {{c-1}
\choose {2}}.
\end{equation*}
Thus it must hold that $m \leq n+1$.
\smallskip

\noindent $(2)$ The assertion comes immediately by $(1)$.
\end{proof}

Now, we are ready to give a proof of Theorem \ref{thm:irreducible
decomposition 2}.\\

\noindent {\bf Proof of Theorem \ref{thm:irreducible decomposition
2}.} First we will show that there exists an irreducible component
of $\textsf{Q}(X)$ which satisfies the conditions $(i)\sim(iii)$.
Let
\begin{equation*}
\textsf{Q}(X) = Y_1 \cup \cdots \cup Y_{\ell}
\end{equation*}
be the minimal irreducible decomposition of $\textsf{Q}(X)$ and let
$\Gamma \subset \P^c$ be a zero-dimensional general linear section
of $X$. Since $d \geq 2c+2k+3$, we have
\begin{equation*}
a_2 (X) \leq a_2 (\Gamma) \leq {{c} \choose {2}}
\end{equation*}
and hence we can write $a_2 (\Gamma) = {{c} \choose {2}}-\epsilon$
for some non-negative integer $\epsilon \leq k$. Then Theorem
\ref{thm:Castelnuovo Theory} says that $\Gamma$ lies on a curve $D$
of degree $\leq c+\epsilon$. Moreover, it holds that $a_2 (\Gamma) =
a_2 (D)$. Indeed, consider the short exact sequence
\begin{equation*}
0 \rightarrow \mathcal{I}_D \rightarrow \mathcal{I}_{\Gamma}
\rightarrow \mathcal{I}_{\Gamma/D} \rightarrow 0.
\end{equation*}
Then we have the long exact sequence
\begin{equation*}
0 \rightarrow H^0 (\P^c , \mathcal{I}_D (2)) \rightarrow H^0 (\P^c ,
\mathcal{I}_{\Gamma} (2)) \rightarrow H^0 (D ,
\mathcal{I}_{\Gamma/D} (2)) \rightarrow \cdots
\end{equation*}
and hence it suffices to show that $H^0 (D , \mathcal{I}_{D/\Gamma}
(2))=0$. We can regard $H^0 (D , \mathcal{I}_{D/\Gamma} (2))$ as a
subspace of $H^0 (D,\mathcal{O}_D (2))$. In particular, if $H^0 (D ,
\mathcal{I}_{\Gamma/D} (2))$ has a non-zero element $s$ then $V(s)$
in $D$ contains $\Gamma$ and hence
\begin{equation*}
|V(s)| \geq |\Gamma|> 2 \times \mbox{deg}(D).
\end{equation*}
This is impossible since $D$ is locally Cohen-Macaulay.
Consequently, it must hold that $H^0 (D , \mathcal{I}_{\Gamma/D}
(2))=0$. Now, it follows that $I(\Gamma)_2 = I(D)_2$ and hence
$\textsf{Q}(D) = \textsf{Q}(\Gamma)$. From
\begin{equation*}
\Gamma \subset D \subset \textsf{Q}(D) = \textsf{Q}(\Gamma) \subset
\textsf{Q}(X) \cap \P^c = \bigcup_{1 \leq i \leq \ell} (Y_i \cap
\P^c ),
\end{equation*}
we have $D \subset Y_i \cap \P^c$ for some $i$. Then $Y_i \subset
\P^{n+c}$ is nondegenerate and hence Proposition
\ref{prop:irreducible decomposition} shows that $\mbox{dim}~Y_i \leq
n+1$. Then $\mbox{dim}~Y_i = n+1$ since $D \subset Y_i \cap \P^c$.
Therefore $D=Y_i \cap \P^c$. Also note that
\begin{equation*}
\mbox{dim}~(X \cap Y_i ) = \mbox{dim}~(X \cap Y_i \cap \P^c) + n
\geq \mbox{dim}~ \Gamma + n = n.
\end{equation*}
Thus it holds that $X \subset Y_i$ and hence $I(Y_i )_2 \subseteq
I(X)_2$. Finally, we have $I(X)_2 \subseteq I(Y_i )_2$ since $Y_i
\subset \textsf{Q}(X)$. In consequence, it holds that $I(X)_2 =
I(Y_i )_2$.

To prove the uniqueness of $Y$, note that $Y_i \subset \P^{n+c}$ is
an $(n+1)$-dimensional nondegenerate projective variety with $a_2
(Y_i )={{c} \choose {2}}-k$ for some $0 \leq k \leq 3$. Thus $Y$ is
the only nondegenerate irreducible component of $\textsf{Q}(Y)$ of
dimension $\geq n+1$. This completes the proof since
$\textsf{Q}(Y)=\textsf{Q}(X)$.  \qed \\


\begin{thebibliography}{0000000}
\bibitem[BLPS1]{BLPS1} M. Brodmann, W. Lee, E. Park and P. Schenzel, {\em On projective varieties of maximal sectional regularity}, J. Pure Appl. Algebra 221 (2017), 98–118.

\bibitem[BLPS2]{BLPS2} M. Brodmann, W. Lee, E. Park and P. Schenzel, {\em On surfaces of maximal sectional regularity}, Taiwanese J. Math. 21 (2017), no. 3, 549–567.

\bibitem[BP1]{BP1} M. Brodmann and E. Park, {\em On varieties of almost minimal degree I: Secant loci of rational normal scrolls},
J. Pure Appl. Algebra 214 (2010), 2033-2043.

\bibitem[BP2]{BP2} M. Brodmann and E. Park, {\em On varieties of almost minimal degree III: Tangent spaces and embedding scrolls},
J. Pure Appl. Algebra 214 (2011), 2859-2872.

\bibitem[BS1]{BS1} M. Brodmann and P. Schenzel, {\em On projective curves of maximal regularity}, Math.
Zeit. 244 (2003), 271 - 289.

\bibitem[BS2]{BS2} M. Brodmann and P. Schenzel, {\em Arithmetic
properties of projective varieties of almost minimal degree}, J.
Algebraic Geometry 16 (2007), 347 - 400.

\bibitem[BS3]{BS3} M. Brodmann and P. Schenzel, {\em Projective curves with maximal regularity and applications to syzygies and surfaces}, Manuscripta Math. 135, (2011), 469-495.

\bibitem[C]{C} G. Castelnuovo, {\em Sui multipli di une serie lineare di gruppi di punti
appartenente ad une curva algebraic}, Rend. Circ. Mat. Palermo (2) 7 (1893), 89-110.

\bibitem[Fa]{Fa} G. Fano, {\em Sopra le curve di dato ordine e dei massimi generi in uno
spazio qualunque}, Mem Accad. Sci. Torino 44, (1894) 335-382.

\bibitem[Fu1]{Fu1} T. Fujita, {\em Projective varieties of $\delta$-genus one}. in Algebraic and Topological Theories, to the memory of T. Miyata, Kinokuniya (1985), 149 - 175.

\bibitem[Fu2]{Fu2} T. Fujita, {\em On Singular del Pezzo varieties}. in Algebraic Geometry, Proceedings of L'Aquila
Conference, 1988, Springer Lecture Notes in Math., 1417 (1990), 116
- 128.

\bibitem[Fu3]{Fu3} T. Fujita, {\em  Classification theories of polarized varieties}. London Mathematical Society Lecture Notes Series 155, Cambridge University Press, 1990.

\bibitem[GL]{GL} M. Green and R. Lazarsfeld, {\em Some results on the syzygies of finite sets and algebraic curves}, Compos. Math. 67 (1988), 301-314.

\bibitem[GLP]{GLP} L. Gruson, R. Lazarsfeld and C. Peskine,
{\em On a theorem of Castelnuovo and the equations defining
projective varieties}, Invent. Math. 72 (1983), 491-506.

\bibitem[H]{H} J. Harris (with the collaboration of D. Eisenbud), {\em Curves in projective space}, Montr\'{e}al: Les Presses de l'Universit\'{e} de Montr\'{e}al (1982).

\bibitem[HSV]{HSV} L. T. Hoa, J. St\"{u}ckrad and W. Vogel, {\em Towards a structure theory for projective varieties of degree=codimension+2}, J. Pure Appl. Algebra 71 (1991), 203-231.

\bibitem[L]{L} S. L'vovsky, {\em On inflection points, monomial curves, and hypersurfaces containing projective curves}, Math. Ann. 306 (1996), 719-735.

\bibitem[Pa1]{Pa1} E. Park, {\em Smooth varieties of almost minimal degree}, J. Algebra 314 (2007), 185-208.

\bibitem[Pa2]{Pa2} E. Park, {\em Higher syzygies of hyperelliptic curves}, J. Pure Appl. Algebra 214 (2010), 101-111.

\bibitem[Pa3]{Pa3} E. Park, {\em On hypersurfaces containing projective varieties}, Forum Math. 27 (2015), 843-875.

\bibitem[Pe]{Pe} I. Petrakiev, {\em Castelnuovo theory via Gr\"{o}bner bases}, J. reine angew. Math. 619 (2008),
49-73.
\end{thebibliography}
\end{document}